\documentclass{article}
\usepackage{amsmath, amsthm, amsfonts, amssymb}
\usepackage{hyperref}
\newtheorem{theorem}{Theorem}[section]
\newtheorem{lemma}[theorem]{Lemma}

\newtheorem{corollary}[theorem]{Corollary}
\theoremstyle{definition}
\newtheorem{definition}[theorem]{Definition}
\newtheorem{example}[theorem]{Example}

\theoremstyle{remark}
\newtheorem{remark}[theorem]{Remark}
\numberwithin{equation}{section}


%

%

%

%

%

%


\begin{document}
\title{\bf Topological Bicomplex Modules}
\date{\textbf{Romesh Kumar and Heera Saini}}
\vspace{0in}
\maketitle

$\textbf{Abstract.}$ In this paper, we develop topological modules over the ring of bicomplex numbers. We discuss bicomplex convexivity, hyperbolic-valued seminorms and  hyperbolic-valued Minkowski functionals  in bicomplex modules. We also study  the conditions under which  topological bicomplex modules and locally bicomplex convex modules become hyperbolic normable and     hyperbolic metrizable  respectively. \\\\
 $\textbf{Keywords.}$  Bicomplex modules, topological bicomplex modules,   bicomplex convexivity, hyperbolic-valued seminorms, hyperbolic-valued Minkowski functionals, locally bicomplex convex modules. 

\begin{section} {Introduction and Preliminaries}
 \renewcommand{\thefootnote}{\fnsymbol{footnote}}
\footnotetext{2010 {\it Mathematics Subject Classification}. 
30G35, 46A19, 46A22.}
\footnotetext{ } 
Bicomplex numbers are being studied for quite a long time. The book of G. B. Price \cite{KK} contains  the most comprehensive and extensive study of bicomplex numbers.
 Recently, a lot of work is being done on bicomplex functional analysis, see, e.g.,  \cite{CS}, \cite{H_6H_6},  \cite{ff}, \cite{G_1G_1}, \cite{GG}, \cite{LL}, \cite{Hahn}, \cite{Luna}, \cite{MM}. \cite{M_1M_1}, \cite{ZZ}, \cite{RR}, \cite{XX} and references therein.  A systematic study of functional analysis in this setting began with the monograph \cite{YY}. After publication of this monograph, some interesting papers have been published in this direction. Also see  \cite{KS} and\cite {RK} for recent work on bicomplex functional analysis. 

Topological vector spaces are one of the basic structures investigated in functional analysis.  The  bicomplex version of topological vector spaces was introduced in \cite{LL} and we are interested to develop a systematic  theory on these topological  structures. In this paper, we present some basic concepts and results on topological  modules over the ring of bicomplex numbers. Bicomplex convexivity plays a central role throughout the paper and has been discussed thoroughly in section 2. Some properties of hyperbolic-valued seminorms have been studied and their relationship with the sets which are bicomplex balanced,  bicomplex  convex and  bicomplex absorbing has been established in section 3.  Section 4 introduces the concept of hyperbolic-valued Minkowski functionals. These  functionals play an important role in the study of  locally bicomplex convex modules which have been discussed in section 5. For the study of topological vector spaces, we refer the reader to \cite{NB},  \cite{JJ}, \cite{DS}, \cite{LA}, \cite{NE}, \cite{HH}, \cite{rudin}  and \cite{YO} and references therein.\\

Now, we summarize some basic properties of bicomplex numbers. The set of bicomplex numbers $\mathbb{BC}$ is defined as 
$$
\mathbb{BC} =\left\{Z=w_{1}+jw_{2}\;|\;w_1 ,w_2 \in \mathbb{C}(i)\right\},
$$
where $i$ and $j$ are imaginary units such that $ ij=ji,  i^2=j^2=-1$ and $\mathbb{C}(i)$ is the set of complex numbers with the imaginary unit $i$. The set $\mathbb{BC}$ of bicomplex numbers  forms a  ring with the addition and multiplication defined as:
$$Z_1 +Z_2=(w_{1}+jw_{2})+(w_{3}+jw_{4})=(w_1 +w_3)+j(w_2 +w_4)\;,$$ $$Z_1 \cdot Z_2=(w_{1}+jw_{2})(w_{3}+jw_{4})=(w_1 w_3 -w_2 w_4)+j(w_2 w_3 +w_1 w_4).$$
Moreover,  $\mathbb{BC}$ is a module over itself.
The product of imaginary units $i$ and $j$ defines a hyperbolic unit $k$ such that $k^2=1$. The product of all units is commutative and satisfies $$ij=k, ~~ik=-j~~ \textmd{and}~~ jk=-i.$$
The set  of hyperbolic numbers $\mathbb{D}$ is defined as
$$\mathbb{D}=\left\{ \alpha = \beta_1 +k \beta_2 : \beta_1 , \beta_2 \in \mathbb{R} \right\}.$$  The set $\mathbb{D}$ of hyperbolic numbers  is a ring and a  module over itself.
Since the set $\mathbb{BC}$ has two imaginary units $i$ and $j$, two conjugations can be defined for bicomplex numbers and composing these two conjugations, we obtain a third one. We define these conjugations as follows:
\begin{enumerate}
\item[(i)] $Z^{\dagger_1}=\overline{w_1}+j\overline{w_2},$ 
\item[(ii)] $Z^{\dagger_2}={w_1}-j{w_2},$ 
\item[(iii)] $Z^{\dagger_3}= \overline{w_1}-j\overline{w_2},$
\end{enumerate}
where $\overline{w_1}, \overline{w_2}$ are   respectively the usual complex conjugates of ${w_1}, {w_2} \in \mathbb C(i)$.  For bicomplex numbers we have  the following three moduli:
\begin{enumerate}
\item[(i)] $|Z|^2_i=Z\;.\;Z^{\dagger_2}={w_1}^2+{w_2}^2 \in \mathbb{C}(i),$
\item[(ii)] $|Z|^2_j=Z\;.\;Z^{\dagger_1}= (|{w_1}|^2-|{w_2}|^2) + j(\;2\;\mathrm{Re}({w_1}\;\overline{{w_2}})) \in \mathbb{C}(j),$  
\item[(iii)] $|Z|^2_k=Z\;.\;Z^{\dagger_3}=(|{w_1}|^2+|{w_2}|^2) +k(\;-2\;\mathrm{Im}({w_1}\;\overline{{w_2}})) \in \mathbb{D}.$ 	 
\end{enumerate}
 A bicomplex number $Z={w_1}+j{w_2}$ is said to be invertible if and only if $$Z\;.\;Z^{\dagger_2}={w_1}^2+{w_2}^2 \not = 0$$ and its inverse is given by $$Z^{-1}=\frac{Z^{\dagger_2}}{|Z|^2_i}.$$ If $Z={w_1}+j{w_2} \not= 0$ is such that $Z\;.\;Z^{\dagger_2}={w_1}^2+{w_2}^2 =0$, then $Z$ is a zero divisor. The set of zero divisors $\mathcal {NC}$ of $\mathbb {BC}$ is, thus, given by
 $$\mathcal {NC} =\left\{Z\;|\; Z\neq 0,\; {w_1}^2+{w_2}^2=0 \right\},$$ and is called the null cone. Bicomplex algebra is considerably simplified by the introduction of two hyperbolic numbers $e_1$ and $e_2$ defined as
 $$e_1= \frac{1+k}{2} \;\; \textmd{and}\;\;\ e_2=\frac{1-k}{2}\;.$$
 The hyperbolic numbers $e_1$ and $e_2$ are zero divisors, which are linearly independent in the $\mathbb{C}$(i)- vector space $\mathbb{BC}$ and satisfy the following properties:
 $${e_1}^2=e_1,\;\;\; {e_2}^2=e_2,\;\; {e_1}^{\dagger_{3}}= e_1 , \;\;\;{e_2}^{\dagger_{3}}= e_2 ,\;\;\; e_1+e_2=1\;\;\; e_1\cdot e_2=0\;.$$ Any bicomplex number $Z={w_1}+j{w_2}$ can be uniquely written as 
\begin{equation}\label{eq1000} Z=e_1z_1+e_2z_2 \;,
\end{equation}  where $z_1=w_1 -iw_2 $ and $z_2=w_1 +iw_2 $ are elements of  $\mathbb{C}(i)$. Formula (\ref{eq1000}) is called the idempotent representation of a bicomplex number $Z$.
The sets $e_1\mathbb {BC}$ and $e_2\mathbb {BC}$ are ideals in the ring $\mathbb {BC}$ such that $$e_1\mathbb {BC} \; \cap \;e_2\mathbb {BC}= \left\{0\right\}$$ and 
\begin{equation}\label{eq1001} \mathbb {BC}= e_1\mathbb {BC}+e_2\mathbb {BC}\;.
\end{equation} Formula  (\ref{eq1001}) is called the idempotent decomposition of $\mathbb {BC}$. Writing a hyperbolic number $\alpha = \beta_1 +k \beta_2 $ in idempotent representation as 
$$ \alpha = e_1 \alpha_1 + e_2 \alpha_2, \;\;\; $$
where $\alpha_1 = \beta_1 + \beta_2 $ and $\alpha_2 = \beta_1 - \beta_2 $ are real numbers, we say that $\alpha$ is positive if $\alpha_1  \geq 0$ and $ \alpha_2 \geq 0 .$ Thus, the set of positive hyperbolic numbers $\mathbb{D}^{+}$ is given by
$$\mathbb{D}^{+} = \{ \alpha = e_1 \alpha_1 + e_2 \alpha_2 : \;\; \alpha_1 \geq 0,  \alpha_2 \geq 0\}.$$
For $\alpha, \gamma \in \mathbb{D},$ define a relation $\leq^{'}$ on $\mathbb{D}$ by $\alpha \; \leq^{'}\; \gamma$ whenever $\gamma - \alpha \in \mathbb{D}^{+}$. This relation is reflexive, anti-symmetric as well as transitive and hence defines a partial order on $\mathbb{D}.$ For further details on partial ordering on $\mathbb{D}$ one can refer \cite [Section 1.5] {YY}.\\
If $A \subset \mathbb{D}$ is $ \mathbb{D}$-bounded from  above, then the $\mathbb{D}$-supremum of $A$ is defined as $$ \sup_{\mathbb{D}}A = \sup A_1 e_1 +  \sup A_2 e_2,$$ where $A_1 = \{ a_1 \; : \; a_1 e_1 +a_2 e_2 \in A\}$ and $A_2 = \{ a_2 \; : \; a_1 e_1 +a_2 e_2 \in A\}$. Similarly, $\mathbb{D}$-infimum of  a  $ \mathbb{D}$-bounded below set $A$ is defined as
$$ \inf_{\mathbb{D}} A = \inf A_1 e_1 +  \inf A_2 e_2,$$ where $A_1$ and $A_2$ are as defined above.
\\The Euclidean norm $|Z|$ of a bicomplex number $Z=e_1z_1+e_2z_2$ is defined as
$$|Z|=\frac{1}{\sqrt{2}}\sqrt{|z_1|^2+|z_2|^2}\;.$$ One can easily check that $$|Z\cdot W| \leq \sqrt{2}|Z||W|,$$ for any $Z, W \in \mathbb{BC}.$
 The hyperbolic-valued or $\mathbb D$-valued norm $|Z|_k$ of a bicomplex number $Z=e_1z_1+e_2z_2$ is defined as  $$|Z|_k=e_1|z_1|+e_2|z_2|.$$ It is easy to see that $$|Z\cdot W|_k = |Z|_k \cdot |W|_k, $$ for any $Z, W \in \mathbb{BC}.$ The comparison of the Euclidean norm $|Z|$ and   hyperbolic-valued norm $|Z|_k$ of a bicomplex number $Z$ gives $$||Z|_k|=|Z|.$$
 The Euclidean norm and the hyperbolic-valued norm of bicomplex numbers have been discussed thoroughly  in \cite [Section 1.3, 1.5] {YY}.\\
A $\mathbb{B}\mathbb{C}$-module $X$ can be written as
\begin{equation}\label{eq1002} X=e_1X_1+e_2X_2\;,
\end{equation} where $X_1=e_1X$ and $X_2=e_2X$ are $\mathbb{C}(i)$-vector spaces as well as $\mathbb{B}\mathbb{C}$-modules(see, \cite{GG}, \cite{XX}). Formula (\ref{eq1002}) is called the idempotent decomposition of $X$. Thus, any $x$ in $X$ can be uniquely written as $x=e_1x_1+e_2x_2$ with $x_1\in X_1,\;x_2\in X_2$. Let $X$ be a $\mathbb{BC}$-module and $|| \cdot ||$ be a norm on $X$ considered as a vector space over $\mathbb{R}$. Then $|| \cdot ||$ is called a (real-valued) norm on the $\mathbb{BC}$-module $X$ if for any $x \in X$ and $z \in \mathbb{BC}$ $$||zx|| \leq \sqrt{2}\;|z|  \cdot ||x||.$$  Assume that $X_1$, $X_2$ are normed spaces with respective norms $\|.\|_1,\;\|.\|_2$. For any $x \in X$, set 
$$ \|x\|=\sqrt{\frac{\|x_1\|^2_1+\|x_2\|^2_2}{2}}\;\;.$$
 Then $\|.\|$ defines a real- valued norm on $\mathbb{BC}$-module $X$. This norm is called the Euclidean-type norm on $X$.\\
 Again, let $X$ be a $\mathbb{BC}$-module and $|| \cdot ||_{\mathbb{D}} : X \longrightarrow \mathbb{D}^{+}$ be a function such that for any $x, y \in X$ and $z \in \mathbb{BC},$ the following properties hold:
 \begin{enumerate}
\item[(i)] $|| x ||_{\mathbb{D}} = 0 \;\;\Leftrightarrow \;\;x=0.$
\item[(ii)] $|| zx ||_{\mathbb{D}} = |z|_{k}|| x ||_{\mathbb{D}}.$
\item[(iii)] 	 $|| x + y ||_{\mathbb{D}} \leq ^{'} || x ||_{\mathbb{D}} + || y ||_{\mathbb{D}}.$
\end{enumerate}
Then  $|| \cdot ||_{\mathbb{D}}$ is called a hyperbolic-valued ($\mathbb{D}$-valued) norm on $X$. If $X_1$, $X_2$ are normed spaces with respective norms $\|.\|_1,\;\|.\|_2$ then  $X$ can be endowed canonically with the hyperbolic-valued norm given by the formula  $$ \|x\|_{\mathbb D}=\|e_1x_1+e_2x_2\|_{\mathbb D}=e_1\|x\|_1+e_2\|x_2\|_2 \;.$$ The comparison of the real-valued norm $||x||$ and   hyperbolic-valued norm $||x||_{\mathbb{D}}$ of $x \in X$ gives $$|||x||_{\mathbb{D}}|=||x||.$$For more details on real-valued norm and hyperbolic-valued ($\mathbb D$-valued) norm see, \cite [Section 4.1, 4.2] {YY}.\\  For further details on bicomplex analysis, we refer the reader to \cite{YY}, \cite{G_1G_1}, \cite{GG},\cite{Hahn}, \cite{Luna},  \cite{MM}, \cite{M_1M_1}, \cite{ZZ}, \cite{KK},  \cite{RR}, \cite{XX} and references therein.  
\end{section}


\begin{section} {Topological Bicomplex Modules} 
Topological bicomplex modules have been introduced in  \cite [Section 2] {LL}. In this section, we introduce the concepts of balancedness, convexivity and absorbedness in bicomplex modules and discuss some of their properties.

\begin{definition}\label{def1}
Let $X$ be a $\mathbb{BC}$-module and $\tau$ be a Hausdorff topology on $X$ such that the operations
\begin{enumerate}
\item[(i)] $+\; : X \times X \longrightarrow X$  and
\item[(i)] $ \cdot\; : \mathbb{BC} \times X \longrightarrow X$
\end{enumerate}
are continuous. Then the pair $(X, \tau )$ is called a topological bicomplex module or topological $\mathbb{BC}$-module.
\end{definition}  
\begin{remark} A topological hyperbolic module can be defined in a similar way by just replacing $\mathbb{BC}$-module with $\mathbb{D}$-module in the above definition.
\end{remark}
\begin{remark}\label{rem2}Let $(X, \tau )$ be a topological $\mathbb{BC}$-module. Write $$X= e_1 X_1 + e_2 X_2, $$ where  $X_1=e_1X$ and $X_2=e_2X$  are $\mathbb{C}(i)$-vector spaces. Then 
  $\tau_l = \{ e_l G \; : \; G \in \tau \}$  
is a Hausdorff topology on $X_l$ for $l= 1, 2$. Moreover, the operations
\begin{enumerate}
\item[(i)] $+\; : X_l \times X_l \longrightarrow X_l$  and
\item[(i)] $ \cdot\; : \mathbb{BC} \times X_l \longrightarrow X_l$
\end{enumerate}
are continuous for $l= 1, 2$. Therefore, $(X_l, \tau_l)$ is a topological $\mathbb{C}(i)$-vector space for $l=1, 2.$
\end{remark}

\begin{example}\label{exm3}
Every $\mathbb{BC}$-module with $\mathbb{D}$-valued norm (or real-valued norm) is a topological $\mathbb{BC}$-module.
\end{example}

\begin{lemma} \label{lemma3a} $(\;\text{\cite [Lemma  2.1] {LL}}\;)$
For any $y \in X,$ the map $T_y : X \rightarrow X$ defined by $$T_y (x) = x + y , \;\; \textmd{for each}\; x \in X,$$ is a homeomorphism.
\end{lemma}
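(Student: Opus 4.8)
The plan is to exhibit $T_y$ as a continuous bijection whose inverse is again a translation, and hence continuous by the same reasoning. First I would verify that $T_y$ is a bijection by producing its inverse explicitly. Since addition in the $\mathbb{BC}$-module $X$ is associative and $-y$ is the additive inverse of $y$, one checks directly that $T_{-y} \circ T_y = T_y \circ T_{-y} = \mathrm{id}_X$; indeed $T_{-y}(T_y(x)) = (x+y)+(-y) = x$, and the reverse composition is handled symmetrically. Thus $T_y$ is a bijection with $(T_y)^{-1} = T_{-y}$.

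Next I would establish continuity of $T_y$ using the continuity of the addition map guaranteed by the module structure. Define $\iota_y : X \to X \times X$ by $\iota_y(x) = (x, y)$. This map is continuous because, with respect to the product topology on $X \times X$, its first component is the identity map on $X$ and its second component is the constant map with value $y$, both of which are continuous. By Definition \ref{def1}, the addition operation $+ : X \times X \to X$ is continuous, so the composition $T_y = {+} \circ \iota_y$ is continuous.

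Finally, the inverse map $(T_y)^{-1} = T_{-y}$ is itself a translation, namely by $-y$, so applying the identical argument with $y$ replaced by $-y$ shows that $T_{-y}$ is continuous as well. Since $T_y$ is then a continuous bijection with continuous inverse, it is a homeomorphism, as claimed.

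I do not expect any genuine obstacle here: the only step requiring care is the justification that fixing the second coordinate yields a continuous map into the product $X \times X$, after which everything follows formally from the continuity of addition in Definition \ref{def1}. It is worth noting that scalar multiplication by elements of $\mathbb{BC}$ plays no role in this argument, so the same proof works verbatim for any topological abelian group.
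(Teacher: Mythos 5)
Your proof is correct and is essentially the standard argument that the paper (which simply cites \cite[Lemma 2.1]{LL} without reproducing a proof) relies on: $T_y$ is a continuous bijection because it factors as addition composed with the continuous map $x \mapsto (x,y)$, and its inverse $T_{-y}$ is continuous by the same reasoning. Your closing observation that only the continuity of addition is used, so the result holds for any topological abelian group, is accurate and consistent with the role this lemma plays in the paper.
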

\begin{lemma} \label{lemma3b} $(\;\text{\cite [Lemma 2.2] {LL}}\;)$
For any  $\lambda  \in \mathbb{BC} \setminus \mathcal{NC}$,  the map $M_{\lambda} : X \rightarrow X$ defined by $$M_{\lambda} (x) = \lambda \cdot x, \;\; \textmd{for each}\; x \in X,$$ is a homeomorphism.
\end{lemma}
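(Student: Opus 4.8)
The plan is to exhibit $M_\lambda$ as a continuous bijection whose set-theoretic inverse is again a map of the same form, and hence also continuous. Since $\lambda \in \mathbb{BC}\setminus\mathcal{NC}$, the number $\lambda$ is invertible, with $\lambda^{-1} = \lambda^{\dagger_2}/|\lambda|_i^2$. First I would record that bijectivity is purely algebraic: using the module axioms (associativity of the action together with $1\cdot x = x$), one gets $M_{\lambda^{-1}}\circ M_\lambda = M_{\lambda^{-1}\lambda} = M_1 = \mathrm{id}_X$ and likewise $M_\lambda \circ M_{\lambda^{-1}} = \mathrm{id}_X$. Hence $M_\lambda$ is a bijection and its inverse is exactly $M_{\lambda^{-1}}$.

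Next I would establish continuity of $M_\lambda$ directly from the joint continuity of the action postulated in Definition \ref{def1}. Writing $m : \mathbb{BC}\times X \to X$ for the scalar multiplication and fixing $\lambda$, consider the map $j_\lambda : X \to \mathbb{BC}\times X$ given by $j_\lambda(x) = (\lambda, x)$. This map is continuous because, in the product topology, its first coordinate is constant and its second coordinate is the identity on $X$. Since $M_\lambda = m \circ j_\lambda$, continuity of $M_\lambda$ follows by composition.

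Finally, to obtain continuity of the inverse, I would observe that $\lambda^{-1}$ is itself invertible, hence lies in $\mathbb{BC}\setminus\mathcal{NC}$, so the argument of the previous paragraph applies verbatim to $M_{\lambda^{-1}}$ and shows that it too is continuous. As $M_\lambda^{-1} = M_{\lambda^{-1}}$ by the first step, $M_\lambda$ is a continuous bijection with continuous inverse, i.e. a homeomorphism.

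I do not expect a genuine obstacle here; the only points needing a word of care are that the inverse of a non-zero-divisor is again a non-zero-divisor (so that the symmetric argument is legitimate), and that the slice map $j_\lambda$ is continuous for the product topology. Both are immediate. One could alternatively avoid the product-topology step by imitating the proof of Lemma \ref{lemma3a}, but the composition argument above is cleaner and uses only the single hypothesis that the action is jointly continuous.
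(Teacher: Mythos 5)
Your argument is correct and is the standard one; the paper itself gives no proof of this lemma (it is quoted verbatim from \cite[Lemma 2.2]{LL}), and the factorization $M_\lambda = m\circ j_\lambda$ together with $M_\lambda^{-1}=M_{\lambda^{-1}}$ is exactly what that reference does. One small caution: with this paper's convention $\mathcal{NC}=\{Z\neq 0 : w_1^2+w_2^2=0\}$, the set $\mathbb{BC}\setminus\mathcal{NC}$ contains $0$, and $M_0$ is not a homeomorphism, so your opening claim that every $\lambda\in\mathbb{BC}\setminus\mathcal{NC}$ is invertible silently excludes the case $\lambda=0$; the statement should really read $\lambda\in\mathbb{BC}\setminus(\mathcal{NC}\cup\{0\})$, and with that reading your proof is complete.
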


\begin{definition} \label{def12}
Let $B$ be a subset of a $\mathbb{BC}$-module $X$. Then $B$ is called a $\mathbb{BC}$-balanced set if for any $x \in B$ and $\lambda \in \mathbb{BC}$ with $|\lambda|_k \leq ' 1,\lambda x \in B$.
\end{definition}
 In other words, $\lambda B \subseteq B$ for any  $\lambda \in \mathbb{BC}, |\lambda|_k \leq ' 1$. It can be easily seen that  if  $B$ is a $\mathbb{BC}$-balanced set, then $0 \in B$. 
\begin{theorem} \label{thm12a}
Let $B$ be  a $\mathbb{BC}$-balanced subset of  a $\mathbb{BC}$-module $X$. Then
\begin{enumerate}
\item[(i)] $\lambda B = B$ whenever $\lambda \in \mathbb{BC} $ with $|\lambda|_k = 1.$
\item[(ii)]   $\lambda B = |\lambda|_k  B$ for each $   \lambda \in \mathbb{BC} \setminus \mathcal{NC}.$
\end{enumerate}
\end{theorem}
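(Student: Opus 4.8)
The plan is to reduce everything to the idempotent components and then exploit the multiplicativity $|Z\cdot W|_k=|Z|_k|W|_k$ together with part (i). First I would record the translation of the two hypotheses on $\lambda$ into conditions on its idempotent representation $\lambda=e_1\lambda_1+e_2\lambda_2$. Since $|\lambda|_k=e_1|\lambda_1|+e_2|\lambda_2|$ and $1=e_1+e_2$, the relation $|\lambda|_k\leq'1$ means exactly that $1-|\lambda|_k=e_1(1-|\lambda_1|)+e_2(1-|\lambda_2|)\in\mathbb{D}^{+}$, i.e.\ $|\lambda_1|\leq1$ and $|\lambda_2|\leq1$; likewise $|\lambda|_k=1$ is equivalent to $|\lambda_1|=|\lambda_2|=1$, which in turn forces $\lambda_1,\lambda_2\neq0$, so that such a $\lambda$ lies in $\mathbb{BC}\setminus\mathcal{NC}$ and is invertible.

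For (i), the forward inclusion is immediate: $|\lambda|_k=1$ gives $|\lambda|_k\leq'1$ (as $0\in\mathbb{D}^{+}$), so the definition of $\mathbb{BC}$-balancedness yields $\lambda B\subseteq B$. For the reverse inclusion I would pass to $\lambda^{-1}$. Applying multiplicativity to $\lambda\cdot\lambda^{-1}=1$ gives $|\lambda|_k|\lambda^{-1}|_k=|1|_k=1$, and since $|\lambda|_k=1$ this forces $|\lambda^{-1}|_k=1\leq'1$; balancedness applied a second time gives $\lambda^{-1}B\subseteq B$. Applying the map $M_\lambda$ to this inclusion (Lemma \ref{lemma3b}) and using $\lambda(\lambda^{-1}x)=x$, we obtain $B=\lambda(\lambda^{-1}B)\subseteq\lambda B$. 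The two inclusions give $\lambda B=B$.

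For (ii), the key step is a polar-type factorization carried out in each idempotent slot. Since $\lambda_l\neq0$, write $\lambda_l=|\lambda_l|\,u_l$ with $|u_l|=1$ for $l=1,2$, and set $u=e_1u_1+e_2u_2$. Then $|u|_k=e_1+e_2=1$, and using $e_1^2=e_1$, $e_2^2=e_2$, $e_1e_2=0$ one checks directly that $|\lambda|_k\cdot u=e_1|\lambda_1|u_1+e_2|\lambda_2|u_2=\lambda$. Hence, by associativity of the scalar action on the set $B$, $\lambda B=(|\lambda|_k\,u)B=|\lambda|_k(uB)$; since $|u|_k=1$, part (i) gives $uB=B$, and therefore $\lambda B=|\lambda|_k B$.

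The computations are all routine; the only point demanding care is the reverse inclusion in (i), where one must justify that $\lambda^{-1}$ again satisfies $|\lambda^{-1}|_k=1$ so that balancedness can be invoked a second time — this is exactly where multiplicativity of $|\cdot|_k$ does the work. The factorization $\lambda=|\lambda|_k\,u$ in (ii) is then what lets part (i) absorb the unit part $u$, leaving only the hyperbolic modulus.
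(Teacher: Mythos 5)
Your proof is correct and follows essentially the same route as the paper: both reduce to idempotent components, prove (i) by applying balancedness to $\lambda$ and to $\lambda^{-1}$ (the paper computes $|1/\lambda|_k$ componentwise where you invoke multiplicativity of $|\cdot|_k$, which amounts to the same thing), and both prove (ii) via the factorization $\lambda=|\lambda|_k\,u$ with $|u|_k=1$, i.e.\ $u=\lambda/|\lambda|_k$. The only difference is that the paper separately dispatches the trivial case $\lambda=0$ in (ii), which your polar factorization tacitly assumes away; adding the one-line observation $0\cdot B=\{0\}=|0|_k B$ makes your argument complete.
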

\begin{proof}
$(i) $ Let $\lambda \in \mathbb{BC}$ such that $ |\lambda|_k = 1$.  Since $B$ is  a $\mathbb{BC}$-balanced,  $\lambda B \subseteq B.$ Writing $\lambda = \lambda_1 e_1 + \lambda_2 e_2$, we have that $| \lambda_1 | =1$ and $|\lambda_2| =1$. Therefore, $$ \left| \frac{1}{\lambda} \right|_k = \frac{1}{|\lambda|_k} = {|\lambda|_k}^{-1} = | \lambda_1 |^{-1} e_1 + |\lambda_2|^{-1} e_1 =1. $$ It then follows that  $$  \frac{1}{\lambda} B \subseteq B.$$ Thus $B \subseteq \lambda B$.\\\\
$(ii)$   Let $\lambda \in \mathbb{BC} \setminus \mathcal{NC}.$  If $\lambda =0$, then clearly  $\lambda B = |\lambda|_k  B$.  So, we assume that $ \lambda \not= 0$. Writing $\lambda = \lambda_1 e_1 + \lambda_2 e_2$, we obtain $$ \frac{\lambda}{|\lambda|_k} = \frac{\lambda_1}{|\lambda_1 |} e_1 +  \frac{\lambda_2}{|\lambda_2 |} e_2. $$ Therefore $$ \left| \frac{\lambda}{|\lambda|_k} \right|_k =1. $$ By $(i)$, we have $$ \frac{\lambda}{|\lambda|_k} B =B.$$ Thus   $\lambda B = |\lambda|_k  B$. 
\end{proof}
\begin{theorem} \label{thm13}
Let $B$ be  a $\mathbb{BC}$-balanced subset of  a $\mathbb{BC}$-module $X$. Then
\begin{enumerate}
\item[(i)]  $e_1 B$ and  $e_2 B$ are balanced sets in $\mathbb{C}(i)$-vector spaces $e_1 X$ and $e_2 X$ respectively.
\item[(ii)]  $e_1 B \subset B$ and  $e_2 B \subset B$.
\end{enumerate}
\end{theorem}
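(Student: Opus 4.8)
The plan is to exploit the characterization of the condition $|\lambda|_k \leq' 1$ in idempotent coordinates. Writing $\lambda = e_1\lambda_1 + e_2\lambda_2$, one has $|\lambda|_k = e_1|\lambda_1| + e_2|\lambda_2|$, so that $|\lambda|_k \leq' 1$ holds precisely when $|\lambda_1| \leq 1$ and $|\lambda_2| \leq 1$ hold simultaneously and \emph{independently}. The freedom to prescribe the two components separately is exactly what drives both parts of the theorem.

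For part (i), to show that $e_1 B$ is balanced in the $\mathbb{C}(i)$-vector space $e_1 X$, I would fix $\mu \in \mathbb{C}(i)$ with $|\mu| \leq 1$ and an arbitrary element $e_1 b \in e_1 B$ (with $b \in B$), and aim to show $\mu(e_1 b) \in e_1 B$. The key device is to introduce the bicomplex scalar $\lambda = e_1\mu$, i.e. the number with idempotent components $\lambda_1 = \mu$ and $\lambda_2 = 0$. Then $|\lambda|_k = e_1|\mu| \leq' 1$, so the $\mathbb{BC}$-balancedness of $B$ gives $\lambda b \in B$. A direct computation using $e_1^2 = e_1$ and $e_1 e_2 = 0$ shows that $\mu(e_1 b) = \lambda b$ and, crucially, that $\lambda b = e_1(\lambda b)$; this latter idempotence identity certifies that $\lambda b$ in fact lies in $e_1 B$, not merely in $B$. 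Since $\mu$ and $e_1 b$ were arbitrary, $e_1 B$ is balanced, and the statement for $e_2 B$ follows by the symmetric argument with the roles of $e_1$ and $e_2$ interchanged.

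For part (ii), I would simply take the bicomplex scalar $\lambda = e_1$ itself, whose idempotent components are $\lambda_1 = 1$ and $\lambda_2 = 0$, so that $|\lambda|_k = e_1 \leq' 1$. Applying the $\mathbb{BC}$-balancedness of $B$ directly to this $\lambda$ yields $e_1 b \in B$ for every $b \in B$, that is, $e_1 B \subset B$; the inclusion $e_2 B \subset B$ is obtained identically with $\lambda = e_2$.

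The only genuinely delicate point is the bookkeeping in part (i): one must verify that the $\mathbb{C}(i)$-scalar action of $\mu$ on the vector $e_1 b \in e_1 X$ coincides with the $\mathbb{BC}$-action of $\lambda = e_1\mu$ on $b$, and then confirm the identity $\lambda b = e_1(\lambda b)$ that places the result inside $e_1 B$ rather than only inside $B$. Both reductions rest on the orthogonality relations $e_1^2 = e_1$, $e_2^2 = e_2$ and $e_1 e_2 = 0$, so I expect no real difficulty beyond careful tracking of which copy of $\mathbb{C}(i)$ each scalar lives in.
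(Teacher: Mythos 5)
Your proposal is correct and follows essentially the same route as the paper: for (i) you replace the complex scalar $\mu$ by the bicomplex scalar $e_1\mu$ (the paper's $a'$ with $e_1a=e_1a'$), apply $\mathbb{BC}$-balancedness, and use the idempotent identities to land back in $e_1B$; for (ii) you apply balancedness with $\lambda=e_1$ (resp.\ $e_2$) exactly as the paper does. No gaps.
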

\begin{proof} $(i)$ Let $x \in e_1 B$ and $a \in \mathbb{C}(i)$ such that $ |a| \leq 1.$ Then there exist $x' \in B$ and $a' \in \mathbb{BC}$ with $|a'|_k \leq ' 1$ such that $x = e_1 x'$ and $e_1 a=e_1 a'$. Since $B$ is $\mathbb{BC}$-balanced, $a' x' \in B$. Thus $ax = a  e_1 x' = e_1 a x'= e_1 a' x'  \in e_1 B$ showing that $e_1 B$ is a balanced set in $\mathbb{C}(i)$-vector space $e_1 X.$ Similarly, one can show that  $e_2 B$ is a balanced set in $\mathbb{C}(i)$-vector space $e_2 X.$\\\\
$(ii)$ Let $x \in e_1 B$. Then there is an $x' \in B$ such that $x = e_1 x'$. Since  $x' \in B$, by $\mathbb{BC}$-balancedness of $B$, $\lambda x' \in B$ for any  $\lambda \in \mathbb{BC},$ with $ |\lambda|_k \leq ' 1.$  In particular, taking $\lambda = e_1$, we get  $x = e_1 x' \in B$. Thus $e_1 B \subset B$. Similarly, it can be shown that $e_2 B \subset B$.
\end{proof}

\begin{definition} \label{def14}
Let $B$ be a subset of a $\mathbb{BC}$-module $X$. Then $B$ is called a $\mathbb{BC}$-convex set if  $x, y \in X$ and $\lambda \in \mathbb{D}^{+}$ satisfying $0 \leq ' \lambda \leq ' 1$ implies that $\lambda x + (1- \lambda) y \in B.$
\end{definition}
The definition of a  $\mathbb{BC}$-convex set is same as that of  $\mathbb{D}$-convex set except for the difference that a $\mathbb{BC}$-convex set is a subset of a  $\mathbb{BC}$-module and a $\mathbb{D}$-convex set is a subset of a  $\mathbb{D}$-module. The $\mathbb{D}$-convexivity for hyperbolic modules has been discussed in \cite [Section 9]{Hahn}. Proof of the next theorem is similar to the proof of \cite[ Proposition 18]{Hahn}, thus we omit it.
\begin{theorem} \label{thm15}
Let $B$ be  a $\mathbb{BC}$-convex subset of  a $\mathbb{BC}$-module $X$. Then 
\begin{enumerate}
\item[(i)]  $e_1 B$ and  $e_2 B$ are convex sets in $\mathbb{C}(i)$-vector spaces $e_1 X$ and $e_2 X$ respectively.
\item[(ii)] $e_1 B \subset B$ and  $e_2 B \subset B$ whenever $0 \in B$.
\end{enumerate}
\end{theorem}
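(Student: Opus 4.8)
The plan is to prove each part by exploiting the idempotent decomposition $\lambda = \lambda_1 e_1 + \lambda_2 e_2$ and translating the $\mathbb{BC}$-convexity hypothesis into statements about the component sets $e_1 B$ and $e_2 B$. The key algebraic observation I would use repeatedly is that for a positive hyperbolic scalar $\lambda \in \mathbb{D}^+$ with $0 \leq' \lambda \leq' 1$, the idempotent components satisfy $0 \leq \lambda_1 \leq 1$ and $0 \leq \lambda_2 \leq 1$ as ordinary real numbers, and that multiplication respects the idempotent splitting, so $\lambda x + (1-\lambda)y$ decomposes componentwise into $e_1(\lambda_1 x_1 + (1-\lambda_1)y_1) + e_2(\lambda_2 x_2 + (1-\lambda_2)y_2)$.

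For part $(i)$, I would take arbitrary points $u, v \in e_1 B$ and a real $t$ with $0 \leq t \leq 1$, and show $t u + (1-t) v \in e_1 B$. First I would lift $u, v$ to elements of $B$: write $u = e_1 x$ and $v = e_1 y$ for some $x, y \in B$. Then I would choose the hyperbolic scalar $\lambda = t e_1 + t e_2$ (equivalently $\lambda = t \cdot 1$ with $t$ real), which lies in $\mathbb{D}^+$ and satisfies $0 \leq' \lambda \leq' 1$ since both components equal $t \in [0,1]$. By $\mathbb{BC}$-convexity of $B$, the point $\lambda x + (1-\lambda) y$ lies in $B$, and applying $e_1$ to it picks out exactly $t u + (1-t) v$ using $e_1^2 = e_1$. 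Hence $t u + (1-t) v \in e_1 B$, establishing convexity of $e_1 B$ in the $\mathbb{C}(i)$-vector space $e_1 X$; the argument for $e_2 B$ is symmetric. This mirrors the component-extraction technique already used in the proof of Theorem~\ref{thm13}$(i)$.

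For part $(ii)$, assuming $0 \in B$, I would show $e_1 B \subset B$ by taking $x \in e_1 B$, writing $x = e_1 x'$ with $x' \in B$, and choosing the hyperbolic scalar $\lambda = e_1$ itself. Since $e_1 = 1 \cdot e_1 + 0 \cdot e_2 \in \mathbb{D}^+$ and its components $1$ and $0$ lie in $[0,1]$, we have $0 \leq' e_1 \leq' 1$. Then $\mathbb{BC}$-convexity applied to the pair $x', 0 \in B$ gives $e_1 x' + (1 - e_1) \cdot 0 = e_1 x' = x \in B$, where the hypothesis $0 \in B$ is precisely what licenses using $0$ as the second point. The inclusion $e_2 B \subset B$ follows identically with $\lambda = e_2$.

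The only point requiring care, rather than genuine difficulty, is verifying that the scalars I select truly satisfy $0 \leq' \lambda \leq' 1$ in the partial order on $\mathbb{D}$ and that I have used the \emph{correct} definition of $\mathbb{BC}$-convexity, namely that it restricts the scalar $\lambda$ to $\mathbb{D}^+$ rather than all of $\mathbb{BC}$. I expect no substantive obstacle: the whole proof reduces to choosing $\lambda \in \{t\cdot 1, e_1, e_2\}$ appropriately and reading off components via the idempotents. This is exactly why the authors remark that the result parallels \cite[Proposition 18]{Hahn}; the $\mathbb{BC}$ and $\mathbb{D}$ cases differ only in the ambient module, and the convexity hypothesis is stated identically in terms of $\mathbb{D}^+$-scalars.
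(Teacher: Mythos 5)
Your proof is correct, and it follows exactly the route the paper intends: the paper omits its own proof, deferring to the analogous \cite[Proposition 18]{Hahn}, and your idempotent--componentwise argument (choosing $\lambda = t\cdot 1$ for part $(i)$ and $\lambda = e_1$ or $e_2$ applied to the pair $x', 0$ for part $(ii)$) is precisely that standard argument, mirroring the technique the paper itself uses in Theorem~\ref{thm13}. The only caveat worth noting is that Definition~\ref{def14} as printed says ``$x, y \in X$'' where it clearly means ``$x, y \in B$''; you have silently used the intended reading, which is the right call.
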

The following lemma is easy to prove:
\begin{lemma}\label{lemma15a}
 In  a $\mathbb{BC}$-module $X$, if $\{ B_l \; :\; l \in \Delta \}$ is a collection of   $\mathbb{BC}$-convex sets, then $\cap_l B_l$ is  $\mathbb{BC}$-convex.
\end{lemma}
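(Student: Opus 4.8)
The plan is to verify the defining property of $\mathbb{BC}$-convexity (Definition \ref{def14}) directly for the intersection, exploiting the fact that membership in $\cap_{l \in \Delta} B_l$ is equivalent to membership in every $B_l$ simultaneously. Write $B = \cap_{l \in \Delta} B_l$. To show that $B$ is $\mathbb{BC}$-convex, I would fix arbitrary points $x, y \in B$ together with a hyperbolic scalar $\lambda \in \mathbb{D}^{+}$ satisfying $0 \leq' \lambda \leq' 1$, and aim to conclude that the combination $\lambda x + (1 - \lambda) y$ again lies in $B$.

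The key step is that since $x, y \in B = \cap_{l} B_l$, we automatically have $x, y \in B_l$ for each index $l \in \Delta$. Because every $B_l$ is assumed to be $\mathbb{BC}$-convex and the same scalar $\lambda$ satisfies $0 \leq' \lambda \leq' 1$, Definition \ref{def14} applied to each $B_l$ yields $\lambda x + (1 - \lambda) y \in B_l$ for every $l \in \Delta$. Intersecting over all $l$ then gives $\lambda x + (1 - \lambda) y \in \cap_{l} B_l = B$, which is precisely the required conclusion, so $B$ is $\mathbb{BC}$-convex.

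As the statement already anticipates, there is no genuine obstacle here: the argument is the hyperbolic analogue of the classical fact that an arbitrary intersection of convex sets is convex, and all that is used is that the single scalar $\lambda$ and the single pair of endpoints $x, y$ can be reused uniformly across all members of the family. The one point I would be careful to record is that the convexity condition is tested with a fixed $\lambda \in \mathbb{D}^{+}$ and fixed $x, y$, so no compatibility between distinct indices $l$ is needed; the quantifier over $\Delta$ simply passes through the convex combination. The same reasoning would of course apply verbatim to an arbitrary (as opposed to merely countable or finite) index set $\Delta$.
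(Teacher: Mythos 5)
Your argument is correct and is exactly the standard one the paper has in mind: the paper states this lemma without proof ("easy to prove"), and the direct verification you give — fixing $x,y$ in the intersection and a single $\lambda$ with $0\leq'\lambda\leq'1$, applying Definition \ref{def14} to each $B_l$, and intersecting — is the intended argument. Nothing further is needed.
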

\begin{theorem} \label{thm16ab}
Let $B$ be a  $\mathbb{BC}$-convex subset of $\mathbb{BC}$-module  $X$. Then $B$ can be written as  $$B \; =\; e_1 B + e_2 B.$$
\end{theorem}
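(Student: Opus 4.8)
The plan is to establish the identity by proving the two inclusions $B \subseteq e_1 B + e_2 B$ and $e_1 B + e_2 B \subseteq B$ separately. Only the second will require $\mathbb{BC}$-convexity; the first is a formal consequence of the idempotent decomposition and holds for an arbitrary subset of $X$.

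For the inclusion $B \subseteq e_1 B + e_2 B$, I would take an arbitrary $x \in B$ and use $e_1 + e_2 = 1$ to write $x = e_1 x + e_2 x$. Since $e_1 x \in e_1 B$ and $e_2 x \in e_2 B$ directly from the definitions of these sets, it follows at once that $x \in e_1 B + e_2 B$. This step uses nothing about $B$ beyond membership.

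The substantive direction is $e_1 B + e_2 B \subseteq B$, and here is where I expect the real content to lie. A typical element of $e_1 B + e_2 B$ has the form $e_1 b + e_2 b'$ with $b, b' \in B$. The key observation is that the idempotent $e_1$ is itself an admissible convex coefficient: written in idempotent representation as $e_1 = e_1 \cdot 1 + e_2 \cdot 0$, it satisfies $e_1 \in \mathbb{D}^{+}$ and $0 \leq ' e_1 \leq ' 1$ (since $1 - e_1 = e_2 \in \mathbb{D}^{+}$ as well), while $1 - e_1 = e_2$. Applying the definition of $\mathbb{BC}$-convexity (Definition \ref{def14}) with $\lambda = e_1$, $x = b$, and $y = b'$ then yields $e_1 b + (1 - e_1) b' = e_1 b + e_2 b' \in B$, which is exactly what is needed.

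The main obstacle is conceptual rather than computational: one must recognize that the hyperbolic number $e_1$ lies between $0$ and $1$ in the partial order $\leq '$ and therefore qualifies as a scalar in the convexity condition, so that the single choice $\lambda = e_1$ decouples the two idempotent components in one stroke. Implicitly this also relies on reading Definition \ref{def14} with $x, y$ ranging over $B$ (as is standard for convex sets), since otherwise the convexity hypothesis could not be invoked for the pair $b, b' \in B$.
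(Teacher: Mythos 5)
Your proposal is correct and follows essentially the same route as the paper: the easy inclusion via $x=(e_1+e_2)x$, and the reverse inclusion by taking $\lambda=e_1$ (so $1-\lambda=e_2$) in the $\mathbb{BC}$-convexity condition applied to $b,b'\in B$. Your remark that Definition \ref{def14} must be read with $x,y$ ranging over $B$ rather than $X$ is a fair catch of a typo in the paper, and the paper's own proof indeed uses the definition in that corrected form.
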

\begin{proof}  Let $x \in B$. Then $e_1 x \in e_1 B$ and $e_2 x \in e_2 B$. Therefore, $$x \; = \; (e_1 +e_2) x \; =\; e_1 x \; + e_2 x \; \in e_1 B + e_2 B$$ showing that $B \; \subset \; e_1 B + e_2 B$.  Now, let $x \in e_1 B$ and $ y \in e_2 B$. Then there exist $x',y' \in B$ such that $x= e_1 x'$ and $y=e_2 y'$. Since $B$ is  $\mathbb{BC}-$convex, we get $$ x+y \; = \; e_1 x' \; + \; e_2 y' \; =\;  \;   e_1 x' \; + \; (1-e_1 )  y' \in B.$$ Thus $\ e_1 B + e_2 B \; \subset B$. Hence, the proof.
\end{proof}
\begin{theorem} \label{thm16ac}
 Let $X$ be a $\mathbb{BC}$-module and $B \subset X$. If  $e_1 B$ and  $e_2 B$ are convex sets in $\mathbb{C}(i)$-vector spaces $e_1 X$ and $e_2 X$ respectively, then  $ e_1 B + e_2 B$ is  a $\mathbb{BC}$-convex subset of  $X$.
\end{theorem}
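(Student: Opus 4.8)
The plan is to verify Definition \ref{def14} directly for the set $C := e_1 B + e_2 B$, exploiting the idempotent decomposition to reduce a single hyperbolic convex combination in $X$ to two ordinary real convex combinations, one in each component space $e_l X$.

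First I would fix arbitrary $u, v \in C$ and $\lambda \in \mathbb{D}^{+}$ with $0 \leq' \lambda \leq' 1$, and record the structural facts needed. Writing $u = p + q$ and $v = p' + q'$ with $p, p' \in e_1 B$ and $q, q' \in e_2 B$, I note that each element of $e_1 B$ is fixed by $e_1$ and annihilated by $e_2$ (since $p = e_1 b$ gives $e_1 p = e_1^2 b = p$ and $e_2 p = e_2 e_1 b = 0$), and symmetrically for $e_2 B$. Next, writing $\lambda = e_1 \lambda_1 + e_2 \lambda_2$ in idempotent form with $\lambda_1, \lambda_2 \in \mathbb{R}$, I would translate the order conditions: $0 \leq' \lambda$ forces $\lambda_1, \lambda_2 \geq 0$, while $\lambda \leq' 1$ (that is, $1 - \lambda \in \mathbb{D}^{+}$) forces $\lambda_1, \lambda_2 \leq 1$, so that $\lambda_1, \lambda_2 \in [0,1]$ are genuine real convex-combination coefficients.

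The computational core is then to expand $\lambda u + (1-\lambda) v$ using $e_1^2 = e_1$, $e_2^2 = e_2$, $e_1 e_2 = 0$ and $e_1 + e_2 = 1$. Because $e_1$ fixes $p, p'$ and annihilates $q, q'$ (and conversely for $e_2$), the hyperbolic scalar $\lambda$ acts on the first component simply as multiplication by the real number $\lambda_1$ and on the second as multiplication by $\lambda_2$, while all cross terms vanish. This collapses the expression to $[\lambda_1 p + (1-\lambda_1) p'] + [\lambda_2 q + (1-\lambda_2) q']$, a sum of a real convex combination of $p, p'$ in $e_1 X$ and one of $q, q'$ in $e_2 X$.

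Finally, since $e_1 B$ is convex in the $\mathbb{C}(i)$-vector space $e_1 X$ and $\lambda_1 \in [0,1]$, the first bracket lies in $e_1 B$; symmetrically the second lies in $e_2 B$. Hence $\lambda u + (1-\lambda) v \in e_1 B + e_2 B = C$, which is exactly the $\mathbb{BC}$-convexity of $C$. The one point requiring care — the expected main obstacle — is the bookkeeping in the expansion step: one must check cleanly that the genuinely hyperbolic multiplication by $\lambda$ decouples into the two real scalings, and that the relation $0 \leq' \lambda \leq' 1$ is precisely equivalent to $\lambda_1, \lambda_2 \in [0,1]$. Once this decoupling is established, the conclusion is immediate.
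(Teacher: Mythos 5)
Your proposal is correct and follows essentially the same route as the paper's own proof: decompose the points and the hyperbolic scalar idempotently, observe that $0 \leq' \lambda \leq' 1$ is equivalent to $\lambda_1, \lambda_2 \in [0,1]$, use $e_1 e_2 = 0$ and $e_1^2 = e_1$, $e_2^2 = e_2$ to decouple the combination into two ordinary convex combinations, and invoke convexity of $e_1 B$ and $e_2 B$ componentwise. The only difference is notational (you write $u = p + q$ with $p \in e_1 B$ where the paper writes $x = e_1 x_1 + e_2 x_2$ with $e_1 x_1 \in e_1 B$), so nothing further is needed.
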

\begin{proof}  Let $x, y \in e_1 B + e_2 B$ and $ 0 \leq '  \lambda \leq ' 1$. Write $x = e_1 x_1 + e_2 x_2, \; y = e_1 y_1 + e_2 y_2$ and $\lambda = e_1  \lambda_1  + e_2 \lambda_2$, where $e_1x_1, e_1y_1 \in e_1 B,$ $e_2 x_2, e_2y_2 \in e_2 B$ and $ 0 \leq \lambda_1 , \lambda_2  \leq 1$. Since $e_1 B$ and  $e_2 B$ are convex in $\mathbb{C}(i)$-vector spaces $e_1 X$ and $e_2 X$ respectively, we have $$ e_1 \lambda_1   x_1  + e_1 (1- \lambda_1) y_1 \in  e_1 B$$ and  $$e_2  \lambda_2   x_2 + e_2 (1- \lambda_2) y_2 \in  e_2 B.$$ Thus,
\begin{eqnarray*}
\lambda x &+&  (1- \lambda y)\\
 &=& ( e_1  \lambda_1  + e_2 \lambda_2) ( e_1 x_1 + e_2 x_2 ) + (1 - (e_1  \lambda_1  +  e_2 \lambda_2)) (e_1 y_1 + e_2 y_2)\\
 &=& (   e_1  \lambda_1 x_1  + e_2 \lambda_2 x_2 ) + ( e_1(1- \lambda_1  )y_1 +e_2 (1- \lambda_2  )y_2)\\
 &=& (   e_1  \lambda_1 x_1  +   e_1(1- \lambda_1  )y_1 ) + (    e_2 \lambda_2 x_2+e_2 (1- \lambda_2  )y_2) \in e_1 B + e_2 B
\end{eqnarray*}showing that  $ e_1 B + e_2 B$ is $\mathbb{BC}$-convex.
\end{proof} If $e_1 B$ and  $e_2 B$ are convex sets in $\mathbb{C}(i)$-vector spaces $e_1 X$ and $e_2 X$ respectively, then $B =  e_1 B + e_2 B$ may not hold. Here is an example:
\begin{example}
Let $X = \mathbb{BC}$ and $B = \{ z =  e_1 z_1 + e_2 z_2 \; : \; z_1, z_2 \in \mathbb{C}(i),  |z_1| + |z_2| <2 \}.$ Then $e_1 B = \{e_1 z_1 \; : \; |z_1|< 2 \} $ and  $e_2 B = \{e_2 z_2 \; : \; |z_2|< 2 \} $ are convex sets in $\mathbb{C}(i)$-vector spaces $e_1 X$ and $e_2 X$ respectively. Observe that $e_1  \frac{3}{2} \in e_1 B$ and  $e_2 \frac{3}{2}  \in e_2 B$, but $ \frac{3}{2} = e_1 \frac{3}{2}   + e_2  \frac{3}{2}  \notin B$. Therefore, $B \not=  e_1 B + e_2 B$ and hence  $B$ is not $\mathbb{BC}$-convex.
\end{example}
\begin{theorem}\label{thm16af}
 Let $X$ be a topological $\mathbb{BC}$-module and $B \subset X$. Then the following statements hold:
\begin{enumerate}
\item[(i)] $ {(e_1 B)}^o = e_1{ B}^o $ \; and\; $ {(e_2 B)}^o = e_2{ B}^o$.
\item[(ii)] $\overline{ e_1 B} = e_1\overline{ B} $\; and\; $\overline{ e_2 B} = e_2 \overline{ B}. $
\end{enumerate}
\end{theorem}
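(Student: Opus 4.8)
The plan is to pass everything through the two idempotent projections $\pi_l \colon X \to X_l$, $\pi_l(x)=e_l x$ ($l=1,2$), and to prove each identity by a pair of inclusions. First I would record two structural facts about $\pi_l$. It is continuous, since it is obtained from the jointly continuous scalar multiplication $\cdot\colon\mathbb{BC}\times X\to X$ of Definition \ref{def1} by fixing the first argument to be $e_l$; and it is open as a map into $(X_l,\tau_l)$, because for $G\in\tau$ one has $\pi_l(G)=e_l G\in\tau_l$ directly from the definition $\tau_l=\{e_lG:G\in\tau\}$ of Remark \ref{rem2}. In addition, combining continuity of $+$ with the idempotent decomposition \eqref{eq1002}, the map $x\mapsto(\pi_1 x,\pi_2 x)$ is a homeomorphism of $X$ onto $X_1\times X_2$ with the product topology $\tau_1\times\tau_2$; this ``box'' description of $\tau$ is what I would use to drive the harder inclusions.

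With this in hand the easy halves of (i) and (ii) are immediate. For (i), openness of $\pi_1$ makes $\pi_1(B^o)$ an open subset of $X_1$ contained in $\pi_1(B)=e_1B$, whence $e_1B^o\subseteq (e_1B)^o$. For (ii), continuity of $\pi_1$ gives $\pi_1(\overline B)\subseteq\overline{\pi_1(B)}$, that is $e_1\overline B\subseteq\overline{e_1B}$. The same two arguments applied to $\pi_2$ settle the corresponding inclusions for $e_2$; these are exactly the directions that hold for any continuous open map, independently of the module structure.

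The real work is the reverse inclusions $(e_1B)^o\subseteq e_1B^o$ and $\overline{e_1B}\subseteq e_1\overline B$, and I expect this to be the main obstacle. Here bare openness and continuity of $\pi_l$ no longer suffice, and I would instead exploit the product structure: identifying $X$ with $X_1\times X_2$, every basic open neighbourhood has the box form $e_1U_1+e_2U_2$ with $U_l\in\tau_l$. To prove $(e_1B)^o\subseteq e_1B^o$ I would take $u\in(e_1B)^o$, choose $U_1\in\tau_1$ with $u\in U_1\subseteq e_1B$, and try to manufacture a box $e_1U_1+e_2U_2\subseteq B$, so that its points lie in $B^o$ and project onto $U_1\ni u$; the closure inclusion would be treated dually, by intersecting a box neighbourhood of a lift of a point of $\overline{e_1B}$ with $B$. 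The delicate point — the step I expect to resist a one-line argument — is supplying the second-coordinate data $U_2$ (respectively, a consistent lift in $\overline B$) from information about $B$ alone, since $\pi_1$ controls only the first factor. I would therefore concentrate the argument on the full product structure $\tau=\tau_1\times\tau_2$, showing that the box neighbourhoods it furnishes descend correctly under $\pi_l$; in effect this amounts to recovering $B$ from its projections in the spirit of Theorem \ref{thm16ab}, and I would treat that reduction as the heart of the proof.
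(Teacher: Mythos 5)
Your two ``easy'' inclusions are correct and coincide with the corresponding halves of the paper's own argument: $e_1B^o\subseteq(e_1B)^o$ because $\pi_1$ carries the open set $B^o$ to a $\tau_1$-open subset of $e_1B$, and $e_1\overline{B}\subseteq\overline{e_1B}$ by continuity of $\pi_1$. But the proposal does not actually prove the two reverse inclusions $(e_1B)^o\subseteq e_1B^o$ and $\overline{e_1B}\subseteq e_1\overline{B}$; you only sketch a strategy (manufacture a box $e_1U_1+e_2U_2\subseteq B$, respectively lift a point of $\overline{e_1B}$ to a point of $\overline{B}$) and you explicitly concede that you cannot supply the second-coordinate data. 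That is a genuine gap, and for an arbitrary $B$ it cannot be closed: take $X=\mathbb{BC}$ and $B=\{e_1z:\;z\in\mathbb{C}(i),\;|z|<1\}$. Then $e_1B=B=e_1G$ for the open set $G=\{e_1z_1+e_2z_2:\;|z_1|<1\}$, so $(e_1B)^o=e_1B\neq\emptyset$ in $(X_1,\tau_1)$, whereas $B$ lies in the proper real subspace $e_1\mathbb{BC}$ of $\mathbb{BC}$ and hence $B^o=\emptyset$ and $e_1B^o=\emptyset$. So no admissible $U_2$ exists and the identity in (i) fails. The closure identity is similarly vulnerable: a convergent net $e_1y_l\rightarrow x$ with $y_l\in B$ gives no control whatsoever over the $e_2$-components of the $y_l$, so one cannot in general produce a limit point $y\in\overline{B}$ with $x=e_1y$.

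The useful news is that your suspicion about where the difficulty sits is exactly right, and it in fact pinpoints a defect in the statement and in the paper's proof: the paper passes from $x\in e_1U\subseteq e_1B$ to ``clearly $y\in B^o$'' (which requires $U\subseteq B$, not just $e_1U\subseteq e_1B$), and in part (ii) it simply assumes the lifted net $\{y_l\}$ converges. The theorem should carry an additional hypothesis such as $B=e_1B+e_2B$ (automatic for the nonempty $\mathbb{BC}$-convex sets to which it is later applied, by Theorem \ref{thm16ab}); under that hypothesis $B$ corresponds to the product $e_1B\times e_2B$ in $X_1\times X_2$, so $\overline{B}$ corresponds to $\overline{e_1B}\times\overline{e_2B}$ and $B^o$ to $(e_1B)^o\times(e_2B)^o$, and projecting to the first factor yields (ii) whenever $B\neq\emptyset$ and (i) whenever $(e_2B)^o\neq\emptyset$ (note the example above is $\mathbb{BC}$-convex with $e_2B=\{0\}$, so (i) can still fail even then). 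If you add the decomposition hypothesis and carry out your product-topology reduction explicitly, your argument closes; as written it is incomplete.
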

\begin{proof}
$(i)$ Let $ x \in  {(e_1 B)}^o$. Then there exists an open neighbourhood $U \subset X$  such that $x \in e_1 U \subset e_1 B$. Let $x = e_1y$, for some $y \in U$. Clearly $y \in { B}^o$. Therefore $x= e_1 y \in e_1 { B}^o$, which proves that $ {(e_1 B)}^o \subset  e_1{ B}^o$.  Now, let $y \in { B}^o$ and $U \subset X$ be an open neighbourhood  such that $y \in  U \subset  B$. Then  $e_1 y \in e_1 U \subset e_1 B$. Since $U$ is open in $X$, $e_1 U$ is an open set in $e_1 X$. Therefore $e_1 y \in {(e_1 B)}^o$, showing that $e_1{ B}^o \subset {(e_1 B)}^o$. Thus $ {(e_1 B)}^o = e_1{ B}^o $.  Similarly, one has $ {(e_2 B)}^o = e_2{ B}^o$.\\\\
$(ii)$  Let $ x \in \overline{ e_1 B}$. Then there exists a net $\{ x_l\} \subset  e_1 B $ such that $x_l \rightarrow x$. Let $x_l = e_1 y_l$, where $y_l \in B$ and let  $y_l \rightarrow y$. Then $y \in \overline{ B}$. Therefore $x_l  = e_1 y_l \rightarrow e_1 y$. Since $e_1 X $ is  Hausdorff,  limits are unique, so $x= e_1 y \in e_1\overline{ B}$. This shows that $\overline{ e_1 B} \subset e_1\overline{ B}.$ Now, let $y \in \overline{ B}$. Then there exists a net $\{ y_l \} \subset  B $ such that $y_l \rightarrow y$. Therefore $\{ e_1 y_l \} \subset e_1  B $  such that  $ e_1 y_l   \rightarrow e_1 y$, showing that $ e_1 y \in \overline{ e_1 B}$. Thus $e_1\overline{ B} \subset \overline{ e_1 B}$, so we have $\overline{ e_1 B} = e_1\overline{ B}.$  Similarly, we have $\overline{ e_2 B} = e_2 \overline{ B}. $
\end{proof}

\begin{theorem}\label{thm16ag}
 Let  $B$ be a  $\mathbb{BC}$-convex set in a topological $\mathbb{BC}$-module $X$. Then the following statements hold:
\begin{enumerate}
\item[(i)] $B^o = e_1{ B}^o +  e_2{ B}^o$ and  $ \overline{ B} =  e_1\overline{ B} + e_2 \overline{ B}. $
\item[(ii)]${ B}^o$  and  $ \overline{ B}$ are  $\mathbb{BC}$-convex sets.
\end{enumerate}
\end{theorem}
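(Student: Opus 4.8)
The plan is to establish the two identities in (i) first and then derive the convexity statements in (ii) from them together with the converse criterion of Theorem~\ref{thm16ac}. The main tools will be the decomposition $B = e_1 B + e_2 B$ available for every $\mathbb{BC}$-convex set (Theorem~\ref{thm16ab}), the commutation of $e_1, e_2$ with interior and closure (Theorem~\ref{thm16af}), and the fact that each $(X_l, \tau_l)$ is a topological $\mathbb{C}(i)$-vector space (Remark~\ref{rem2}).

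For the interior identity $B^o = e_1 B^o + e_2 B^o$, the inclusion $\subseteq$ is immediate, since $x \in B^o$ gives $e_1 x \in e_1 B^o$, $e_2 x \in e_2 B^o$, hence $x = e_1 x + e_2 x \in e_1 B^o + e_2 B^o$; this direction uses no convexity. For $\supseteq$ I would take $x = e_1 p + e_2 q$ with $p, q \in B^o$, choose open neighbourhoods $U \ni p$ and $V \ni q$ with $U, V \subseteq B$, and produce the open set $W = e_1 U + e_2 V$ with $x \in W \subseteq B$. Containment follows from $e_1 U + e_2 V \subseteq e_1 B + e_2 B = B$ by Theorem~\ref{thm16ab}, and $x \in W$ is clear from $p \in U$, $q \in V$.

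The main obstacle is showing that $W = e_1 U + e_2 V$ is open in $X$, since $e_1 U$ and $e_2 V$ lie in slices and are not themselves open. I plan to resolve this via the identity $e_1 U + e_2 V = (U + e_2 X) \cap (V + e_1 X)$, which is verified by a short idempotent computation using $e_1 e_2 = 0$ and $e_1 + e_2 = 1$. Each factor is open because $U + e_2 X = \bigcup_{w \in X} T_{e_2 w}(U)$ is a union of translates of the open set $U$, each open by Lemma~\ref{lemma3a}, and symmetrically for $V + e_1 X$; hence $W$ is a finite intersection of open sets. The closure identity $\overline{B} = e_1 \overline{B} + e_2 \overline{B}$ is handled in parallel: $\subseteq$ is immediate, while for $\supseteq$ I would write $x = e_1 p + e_2 q$ with $p, q \in \overline{B}$, pick nets $a_\lambda \to p$ and $b_\mu \to q$ in $B$, and observe that the double net $e_1 a_\lambda + e_2 b_\mu$ lies in $e_1 B + e_2 B = B$ and converges to $x$ by continuity of the module operations, so $x \in \overline{B}$.

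Finally, for (ii) I would invoke Theorem~\ref{thm16ac}. Because $B$ is $\mathbb{BC}$-convex, the sets $e_1 B$ and $e_2 B$ are convex in $X_1$ and $X_2$ (Theorem~\ref{thm15}); since these are topological $\mathbb{C}(i)$-vector spaces, the classical facts that the interior and the closure of a convex set are convex yield that $e_1 B^o = (e_1 B)^o$ and $e_1 \overline{B} = \overline{e_1 B}$ are convex, and likewise for $e_2$, where I have rewritten these sets using Theorem~\ref{thm16af}. Theorem~\ref{thm16ac} then shows that $e_1 B^o + e_2 B^o$ and $e_1 \overline{B} + e_2 \overline{B}$ are $\mathbb{BC}$-convex, and by part (i) these equal $B^o$ and $\overline{B}$ respectively, completing the proof.
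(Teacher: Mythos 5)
Your proposal is correct and follows essentially the same route as the paper: decompose $B = e_1 B + e_2 B$ via Theorem \ref{thm16ab}, establish the two identities in (i), and deduce (ii) from Theorems \ref{thm15}, \ref{thm16af} and \ref{thm16ac} together with the classical convexity of interiors and closures in the slice spaces. The only differences are that you explicitly justify the openness of $e_1 U + e_2 V$ through the identity $e_1 U + e_2 V = (U + e_2 X)\cap (V + e_1 X)$ --- a point the paper asserts without proof for $e_1 B^o + e_2 B^o$ --- and that you use a net argument for $e_1\overline{B} + e_2\overline{B} \subset \overline{B}$ where the paper cites $\overline{A} + \overline{C} \subset \overline{A + C}$ from Rudin.
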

\begin{proof}  $(i)$  Since $B$ is  $\mathbb{BC}$-convex, we can write $B$ as 
$$ B= e_1 B +  e_2 B.$$ Clearly, $ {B}^o \subset  e_1{ B}^o +  e_2{ B}^o$. Now, $e_1{ B}^o +  e_2{ B}^o$ is an open set in $X$ such that $ e_1{ B}^o +  e_2{ B}^o \subset e_1 B +  e_2 B = B$.  But ${B}^o$ is the largest open set contained in $B$. Therefore  $e_1{ B}^o +  e_2{ B}^o \subset {B}^o$.  Thus ${B}^o = e_1{ B}^o +  e_2{ B}^o$.  Again, trivially $   \overline{ B}  \subset  e_1\overline{ B} + e_2 \overline{ B}.$  From Theorem \ref{thm16af} and \cite[Theorem 1.13(b)]{rudin}, it follows that $ e_1\overline{ B} + e_2 \overline{ B} = \overline{e_1  B} +  \overline{e_2  B} \subset  \overline{e_1 B +  e_2 B} = \overline{B}.$ Thus, we have  $ \overline{ B} =  e_1\overline{ B} + e_2 \overline{ B}. $ \\\\
 $(ii)$ Since $B$ is  $\mathbb{BC}$-convex, $e_1 B$ and  $e_2 B$ are  convex sets in $\mathbb{C}(i)$-vector spaces $e_1 X$ and $e_2 X$ respectively. It then follows from  \cite [Theorem 1.13]{Hahn} that  $(e_1 B)^o$ and  $(e_2 B)^o$ are  convex sets  in $\mathbb{C}(i)$-vector spaces $e_1 X$ and $e_2 X$ respectively, so by Theorem \ref{thm16af}(i)  $e_1B^o$ and  $e_2 B^o$ are  convex sets  in $\mathbb{C}(i)$-vector spaces $e_1 X$ and $e_2 X$ respectively. From Theorem \ref{thm16ac} we see that   $e_1B^o + e_2 B^o$  is  $\mathbb{BC}$-convex. Finally, from $(i)$ it follows that $ B^o $ is a $\mathbb{BC}$-convex subset of  $X$.  Similarly, one can prove that $ \overline{ B}$ is  $\mathbb{BC}$-convex.
\end{proof}
\begin{theorem}\label{thm16ah}
 Let  $B$ be a $\mathbb{BC}$-balanced set in a topological $\mathbb{BC}$-module $X$. Then $ \overline{ B}$ is $\mathbb{BC}$-balanced and so is ${ B}^o$ if $0 \in B^o$.
\end{theorem}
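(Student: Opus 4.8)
The plan is to verify the balancedness condition of Definition \ref{def12} directly: for every $\lambda \in \mathbb{BC}$ with $|\lambda|_k \leq' 1$, I must show $\lambda\overline{B} \subseteq \overline{B}$, and, under the hypothesis $0 \in B^o$, that $\lambda B^o \subseteq B^o$. The closure half is the easy one. First I would observe that for a \emph{fixed} scalar $\lambda$ the map $M_\lambda(x) = \lambda x$ is continuous, which follows at once from the joint continuity of scalar multiplication in Definition \ref{def1} and, crucially, does \emph{not} require $\lambda$ to be invertible (so Lemma \ref{lemma3b} is not invoked here). Using the standard fact that $M_\lambda(\overline{B}) \subseteq \overline{M_\lambda(B)}$ for a continuous map, together with $\lambda B \subseteq B$ coming from balancedness, I obtain $\lambda\overline{B} = M_\lambda(\overline{B}) \subseteq \overline{\lambda B} \subseteq \overline{B}$, which shows $\overline{B}$ is $\mathbb{BC}$-balanced.

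For the interior I would split into cases according to the nature of $\lambda$. If $\lambda \in \mathbb{BC}\setminus\mathcal{NC}$, then $M_\lambda$ is a homeomorphism by Lemma \ref{lemma3b}, so $\lambda B^o$ is open; since $\lambda B^o \subseteq \lambda B \subseteq B$ and $B^o$ is the largest open subset of $B$, it follows that $\lambda B^o \subseteq B^o$. If $\lambda = 0$, then $\lambda B^o = \{0\} \subseteq B^o$, which is exactly where the hypothesis $0 \in B^o$ enters (and is genuinely needed, since every balanced set must contain the origin).

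The remaining and genuinely delicate case is $\lambda \in \mathcal{NC}$, where $M_\lambda$ is no longer open and the ``open subset of $B$'' argument collapses. Writing $\lambda = e_1\lambda_1 + e_2\lambda_2$, a zero divisor with $|\lambda|_k \leq' 1$ has exactly one component vanishing; say $\lambda = e_2\lambda_2$ with $0 < |\lambda_2| \leq 1$, the other case being symmetric. My idea is to regularize the vanishing component: set $\mu = e_1 + e_2\lambda_2 \in \mathbb{BC}\setminus\mathcal{NC}$, which satisfies $|\mu|_k \leq' 1$, and apply the invertible case to conclude $\mu B^o \subseteq B^o$. Since $e_2\mu x = e_2\lambda_2 x = \lambda x$ for every $x$, multiplying through by $e_2$ gives $\lambda B^o = e_2(\mu B^o) \subseteq e_2 B^o$. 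I would then close the loop with the idempotent/interior compatibility already established: $e_2 B^o = (e_2 B)^o$ by Theorem \ref{thm16af}(i), while $(e_2 B)^o \subseteq B^o$ by monotonicity of the interior together with $e_2 B \subseteq B$ from Theorem \ref{thm13}(ii). Hence $\lambda B^o \subseteq e_2 B^o = (e_2 B)^o \subseteq B^o$, finishing the zero-divisor case.

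I expect the main obstacle to be precisely this zero-divisor case: the failure of $M_\lambda$ to be a homeomorphism (indeed, even to be open) for $\lambda \in \mathcal{NC}$ blocks the clean topological-vector-space argument, and one is forced to pass through the idempotent decomposition. The regularization $\mu = e_1 + e_2\lambda_2$, combined with the interior-versus-idempotent identity of Theorem \ref{thm16af} and the containment $e_2 B \subseteq B$ of Theorem \ref{thm13}, is what circumvents this difficulty without needing any convexity assumption on $B$.
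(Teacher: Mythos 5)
Your treatment of $\overline{B}$ is correct and in fact cleaner than the paper's: the paper splits into cases ($\lambda$ invertible, $\lambda=0$, $\lambda$ a zero divisor) and, for zero divisors, routes through Theorem \ref{thm16af}(ii) and the balancedness of $e_1B$, whereas your single observation that $M_\lambda$ is continuous for \emph{every} fixed $\lambda$ gives $\lambda\overline{B}\subseteq\overline{\lambda B}\subseteq\overline{B}$ uniformly. Your invertible and $\lambda=0$ cases for $B^o$ are also fine and match the paper.

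The gap is in your zero-divisor case for $B^o$, at the last link $e_2B^o=(e_2B)^o\subseteq B^o$. The set $(e_2B)^o$ appearing in Theorem \ref{thm16af}(i) is the interior of $e_2B$ \emph{relative to} the component $e_2X$ with the topology $\tau_2=\{e_2G : G\in\tau\}$ of Remark \ref{rem2}, not the interior in $X$; since $e_2X$ is in general a proper submodule of $X$, the set $e_2B$ typically has empty interior in $X$, so ``monotonicity of the interior'' applied in $X$ to $e_2B\subseteq B$ yields only $\emptyset\subseteq B^o$ and says nothing about $(e_2B)^o$. You cannot mix the two notions: the identity $e_2B^o=(e_2B)^o$ holds for the relative interior, while the inclusion into $B^o$ would require the ambient one. (The paper's own proof makes exactly the same conflation at exactly the same step, justifying $(e_1B)^o\subseteq B^o$ by $e_1B\subseteq B$.) Worse, the needed inclusion $e_2B^o\subseteq B^o$ can genuinely fail for a balanced, non-convex $B$: in $X=\mathbb{BC}^2\cong\mathbb{C}^2\times\mathbb{C}^2$ in idempotent coordinates, where $\lambda\cdot(\xi,\eta)=(\lambda_1\xi,\lambda_2\eta)$, take $B=(N\times N)\cup(\widehat{U}\times\widehat{U})$ with $N$ the open unit ball of $\mathbb{C}^2$, $U$ the open ball of radius $1$ about $(10,0)$, and $\widehat{U}=\{\mu\xi : |\mu|\le 1,\ \xi\in U\}$ its balanced hull. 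This $B$ is $\mathbb{BC}$-balanced, $0\in B^o$, and $x_0=\bigl((10,0),(10,0)\bigr)\in B^o$, yet $e_1x_0=\bigl((10,0),(0,0)\bigr)\notin B^o$, since every neighbourhood of it contains points $\bigl((10,0),(0,\delta)\bigr)$ lying in neither $N\times N$ nor $\widehat{U}\times\widehat{U}$. So the obstacle you correctly identified in the zero-divisor case is not circumvented by your regularization: what is missing is some substitute for the containment $e_1B+e_2B\subseteq B$ (which $\mathbb{BC}$-convexity would provide, but mere balancedness does not), and without it the interior half of the statement does not go through.
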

\begin{proof} Let $\lambda \in \mathbb{BC}$ such that $ |\lambda|_k \leq ' 1$. If $\lambda =0$, then $\lambda  \overline{B} = \{ 0\} \subset\overline{B} $. If $ 0 < ' | \lambda|_k \leq ' 1$, then by Lemma \ref{lemma3b}, we have $\lambda  \overline{B} =  \overline{\lambda  B} \subset \overline{B}$. If $ \lambda = \lambda_1 e_1$ with $ 0< |\lambda_1| \leq 1$, then using respectively Theorem \ref{thm16af}(ii), \cite [ Theorem  2.1.2] {LA},  balancedness  of $e_1 B$  and Theorem \ref{thm13}(ii), we obtain    $\lambda  \overline{B} =  \lambda_1 e_1 \overline{B} =  \lambda_1 \overline{ e_1 B} =  \overline{  \lambda_1 e_1 B}  \subset \overline{  e_1 B} \subset  \overline{  B}.$ If $ \lambda = \lambda_2 e_2$ with $ 0< |\lambda_2| \leq 1$, then similarly as above, we get $\lambda  \overline{B}= \overline{B}$. Thus  $ \overline{ B}$ is $\mathbb{BC}$-balanced. Now, suppose that $0 \in B^o$ and $\lambda \in \mathbb{BC},\; |\lambda|_k \leq ' 1$. If $\lambda =0$, then $\lambda B^o = \{ 0\} \subset B^o $. If  $ 0 < ' | \lambda|_k \leq ' 1$, then using  Lemma \ref{lemma3b}, we get $\lambda B^o = {( \lambda B)}^o \subset B^o$.  If $ \lambda = \lambda_1 e_1$ with $ 0< |\lambda_1| \leq 1$, then using respectively Theorem \ref{thm16af}(i), \cite [ Theorem  2.1.2] {LA},  balancedness  of $e_1 B$  and Theorem \ref{thm13}(ii), we obtain $\lambda   B^o =  \lambda_1 e_1  B^o =  \lambda_1  (e_1 B)^o = ( \lambda_1  e_1 B)^o  \subset  (  e_1 B)^o \subset  B^o.$ Similarly, if $ \lambda = \lambda_2 e_2$ with $ 0< |\lambda_2| \leq 1$, then $\lambda B^o \subset B^o $. This proves that  ${ B}^o$ is  $\mathbb{BC}$-balanced whenever $0 \in B^o$
\end{proof}
\begin{definition}\label{def17}
Let $B$ be a subset of a $\mathbb{BC}$-module $X$. Then $B$ is called a $\mathbb{BC}$-absorbing set if for each $x \in X$, there exists $ \epsilon > ' 0$ such that $\lambda x \in B$ whenever  $ 0 \; \leq ' \;  \lambda \;  \leq ' \;  \epsilon$.
\end{definition}
It should be noted that a $\mathbb{BC}$-absorbing set always contains the origin.
\begin{theorem} \label{thm17a}
Let $B$ be  a $\mathbb{BC}$-absorbing set in  a $\mathbb{BC}$-module $X$. Then  $e_1 B$ and  $e_2 B$ are absorbing sets in $\mathbb{C}(i)$-vector spaces $e_1 X$ and $e_2 X$ respectively.
\end{theorem}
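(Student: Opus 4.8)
The plan is to exploit the idempotent decomposition $X = e_1 X + e_2 X$ from the preliminaries, together with the fact (recorded in Remark \ref{rem2}) that $e_1 X$ is a $\mathbb{C}(i)$-vector space whose elements are exactly those of the form $e_1 x$ with $x \in X$, and to recover the real scalar witnessing absorption in $e_1 X$ by choosing a hyperbolic scalar $\lambda$ whose $e_2$-component is set to zero.

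First I would fix an arbitrary $y \in e_1 X$ and write $y = e_1 x$ for some $x \in X$. Applying the $\mathbb{BC}$-absorbing hypothesis (Definition \ref{def17}) to $x$ yields some $\epsilon = e_1 \epsilon_1 + e_2 \epsilon_2 >' 0$, so that $\epsilon_1 > 0$ and $\epsilon_2 > 0$, with the property that $\lambda x \in B$ for every $\lambda \in \mathbb{D}$ satisfying $0 \leq' \lambda \leq' \epsilon$. The key step is then to translate the hyperbolic order into the idempotent components: writing $\lambda = e_1 \lambda_1 + e_2 \lambda_2$ with $\lambda_1,\lambda_2 \in \mathbb{R}$, the condition $0 \leq' \lambda \leq' \epsilon$ is, by the description of $\mathbb{D}^{+}$ and the partial order $\leq'$, precisely equivalent to $0 \leq \lambda_1 \leq \epsilon_1$ and $0 \leq \lambda_2 \leq \epsilon_2$.

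Next, for a real scalar $a$ with $0 \leq a \leq \epsilon_1$, I would take $\lambda = e_1 a$, that is, $\lambda_1 = a$ and $\lambda_2 = 0$. Since $0 \leq a \leq \epsilon_1$ and trivially $0 \leq 0 \leq \epsilon_2$, this $\lambda$ satisfies $0 \leq' \lambda \leq' \epsilon$, whence $\lambda x \in B$. Using $e_1^2 = e_1$ one computes $\lambda x = e_1 a x = a(e_1 x) = a y$, so $a y \in B$; and since $a y = e_1 a x = e_1(a y)$, it follows that $a y \in e_1 B$. As $y \in e_1 X$ was arbitrary and $\epsilon_1 > 0$, this shows that $e_1 B$ absorbs every point of $e_1 X$, i.e. $e_1 B$ is absorbing. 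The statement for $e_2 B$ follows by the symmetric choice $\lambda = e_2 a$ with $0 \leq a \leq \epsilon_2$.

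I do not expect a serious obstacle here; the only point requiring genuine care is the reading of the hyperbolic order. Concretely, one must check that forcing one idempotent component of $\lambda$ to vanish still yields an admissible scalar (it does, because $0 \leq 0 \leq \epsilon_2$), and that $\epsilon >' 0$ really forces both $\epsilon_1 > 0$ and $\epsilon_2 > 0$, which guarantees a nondegenerate absorption radius in each factor. Everything else reduces to routine manipulation with the idempotents $e_1$ and $e_2$.
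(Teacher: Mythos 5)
Your proposal is correct and follows essentially the same route as the paper: both decompose $\epsilon$ and $\lambda$ into idempotent components, use the hypothesis on $x$ with $y = e_1x$, and read off absorption in $e_1X$ from the $e_1$-component, the only cosmetic difference being that you specialize to $\lambda = ae_1$ up front while the paper keeps a general $\lambda$ and then multiplies by $e_1$. No gaps.
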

\begin{proof}
Let $x \in e_1 X.$ Then there exists $x' \in X$ such that $x = e_1 x'$. Since $B$ is $\mathbb{BC}$-absorbing, there exists $ \epsilon > ' 0$ such that $\lambda x \in B$ whenever  $ 0 \; \leq ' \;  \lambda \;  \leq ' \;  \epsilon $. Write   $\epsilon =  \epsilon_1 e_1 +  \epsilon_2 e_2$ and $ \lambda =  \lambda_1 e_1 +  \lambda_2 e_2.$ Then, we have  $\epsilon_1 > 0 $ and $$ e_1 ( \lambda x) = e_1 \lambda_1 x = \lambda_1 y \in e_1 B,$$ where $0 \leq  \lambda_1 \leq \epsilon_1$, proving that $e_1 B$ is an absorbing subset of $\mathbb{C}(i)$-vector spaces $e_1 X$. In a similar way, $e_2 B$ can be shown to be an absorbing subset of $\mathbb{C}(i)$-vector spaces $e_2 X$.
\end{proof}
\begin{remark} We have seen that if  $B$ is  a $\mathbb{BC}$-balanced set then $e_1 B \subset B$ and  $e_2 B \subset B$ and  if $B$ is  a $\mathbb{BC}$-convex set containing $0$ then also $e_1 B \subset B$ and  $e_2 B \subset B$. But it may happen that if $B$ is  a $\mathbb{BC}$-absorbing set and that it contains $0$, then neither  $e_1 B \subset B$ nor  $e_2 B \subset B$. We have the following example:
\end{remark}
\begin{example}
Let  $X = \mathbb{BC}$ and $B = \{ z \in \mathbb{BC} \; : \; |z|_k <' \frac{1}{2} \} \cup \{ 1 \}$. Then $B$ is a $\mathbb{BC}$-absorbing subset of  $\mathbb{BC}$. Now $1 \in B$, so we have $e_1 \in e_1 B$ and $e_2 \in e_2 B$. But neither  $e_1  \in B$ nor $e_2 \in B$  showing that $e_1 B  \nsubseteq B$ and $e_2  B  \nsubseteq B$.
\end{example}
The following result is essentially based on   \cite [Theorem 1.14] {rudin}.
\begin{theorem} 
Let  $(X, \tau ) $ be a topological  $\mathbb{BC}$-module. Then the following statements hold:
\begin{enumerate}
\item[(i)]Each neighbourhood of $0$ in $X$ is $\mathbb{BC}$-absorbing.
\item[(ii)] Each neighbourhood of $0$ in $X$ contains a $\mathbb{BC}$-balanced  neighbourhood of $0$.
\item[(iii)] Each  $\mathbb{BC}$-convex neighbourhood of $0$ in $X$ contains a $\mathbb{BC}$-convex  $\mathbb{BC}$-balanced  neighbourhood of $0$.
\end{enumerate}
\end{theorem}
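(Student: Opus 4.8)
The plan is to follow Rudin's classical argument for topological vector spaces (Theorem 1.14 in \cite{rudin}) as closely as possible, leaning on continuity of the module operations from Definition \ref{def1}, the multiplicativity of $|\cdot|_k$, and Lemmas/Theorems already established; the one genuinely bicomplex difficulty is the presence of zero divisors, which I handle by an explicit idempotent factorization.

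For (i), fix a neighbourhood $U$ of $0$ and $x\in X$. The map $\mu\mapsto\mu x$ from $\mathbb{BC}$ to $X$ is continuous and sends $0$ to $0$, so the preimage of $U$ is open in $\mathbb{BC}$ and contains a Euclidean ball $\{\,|\mu|<\rho\,\}$. Setting $\epsilon=e_1\frac{\rho}{2}+e_2\frac{\rho}{2}$, a direct computation with the Euclidean norm shows that every hyperbolic $\lambda$ with $0\leq'\lambda\leq'\epsilon$ satisfies $|\lambda|<\rho$, hence $\lambda x\in U$. The only step to verify is this elementary estimate, so (i) is essentially immediate.

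For (ii), I would invoke continuity of scalar multiplication at $(0,0)$: since $0\cdot 0=0\in U$, there exist a neighbourhood $N$ of $0$ in $\mathbb{BC}$ and a neighbourhood $W$ of $0$ in $X$ with $\mu w\in U$ for all $\mu\in N,\ w\in W$. Choosing $\delta>'0$ small enough (with both idempotent components strictly positive, so $\delta\notin\mathcal{NC}$) that $\{\,|\mu|_k\leq'\delta\,\}\subseteq N$, I set $V=\bigcup_{|\mu|_k\leq'\delta}\mu W$. Then $V\subseteq U$ is immediate; $V$ is a neighbourhood of $0$ because $V\supseteq\delta W=M_\delta(W)$ and $M_\delta$ is a homeomorphism by Lemma \ref{lemma3b}; and $V$ is $\mathbb{BC}$-balanced because, for $|\lambda|_k\leq'1$ and $v=\mu w\in V$, one has $\lambda v=(\lambda\mu)w$ with $|\lambda\mu|_k=|\lambda|_k|\mu|_k\leq'\delta$.

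For (iii), the main part, start from a $\mathbb{BC}$-convex neighbourhood $U$ of $0$, use (ii) to obtain a $\mathbb{BC}$-balanced neighbourhood $V\subseteq U$, and define $A=\bigcap_{|\alpha|_k=1}\alpha U$. Each $\alpha U$ is $\mathbb{BC}$-convex, so $A$ is $\mathbb{BC}$-convex by Lemma \ref{lemma15a}; taking $\alpha=1$ gives $A\subseteq U$; and since $\alpha V=V$ for $|\alpha|_k=1$ by Theorem \ref{thm12a}(i), we get $V\subseteq A$, so $A$ is a neighbourhood of $0$ with $0\in A$. The crux is $\mathbb{BC}$-balancedness of $A$: here $\beta A=A$ whenever $|\beta|_k=1$ (reindex the intersection, using that such $\beta$ is invertible), and $rA\subseteq A$ for hyperbolic $0\leq' r\leq'1$ (apply Definition \ref{def14} with $y=0$). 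The hard part is that a general $\lambda$ with $|\lambda|_k\leq'1$ need not factor as $r\beta$ with $\beta$ invertible when $\lambda$ is a zero divisor. I resolve this by writing $\lambda=e_1\lambda_1+e_2\lambda_2$ and taking $\beta=e_1\beta_1+e_2\beta_2$ and $r=e_1|\lambda_1|+e_2|\lambda_2|$, where $\beta_l=\lambda_l/|\lambda_l|$ if $\lambda_l\neq 0$ and $\beta_l=1$ otherwise; then $|\beta|_k=1$, $0\leq' r\leq'1$, and $\beta r=\lambda$, so $\lambda A=\beta(rA)\subseteq\beta A=A$. Finally, passing to the interior $A^o$, which is $\mathbb{BC}$-convex by Theorem \ref{thm16ag}(ii) and $\mathbb{BC}$-balanced by Theorem \ref{thm16ah} (since $0\in A^o$), produces the desired open $\mathbb{BC}$-convex $\mathbb{BC}$-balanced neighbourhood of $0$ contained in $U$.
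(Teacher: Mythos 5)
Your proposal is correct and follows essentially the same route as the paper's proof, which is likewise modelled on Rudin's Theorem 1.14: continuity of scalar multiplication for (i) and (ii), the union $\bigcup_{|\gamma|_k\leq'\delta}\gamma W$ for the balanced neighbourhood, and the intersection $\bigcap_{|\alpha|_k=1}\alpha U$ followed by passage to the interior via Theorems \ref{thm16ag} and \ref{thm16ah} for (iii). The only differences are that you make explicit two points the paper leaves implicit, namely why the union in (ii) is a neighbourhood of $0$ (via Lemma \ref{lemma3b}) and the idempotent factorization $\lambda=\beta r$ handling zero divisors in (iii); both are welcome clarifications rather than a change of method.
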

\begin{proof} 
$(i)$  Let $U \subset X$ be a neighbourhood of $0$. Let $x \in X$. Since scalar multiplication is continuous and $\cdot (0,\; x) =0,$  there exists a neighbourhood $V$ of $x$ and $\epsilon > ' 0$ such that whenever $|\gamma|_k <' \epsilon$, we have $\gamma V \subset U$. In particular, for $\gamma$ satisfying $0 \leq ' \gamma \leq ' \epsilon /2 $, we have $\gamma x \in U$. This shows that $U$ is $\mathbb{BC}$-absorbing. \\\\
$(ii)$ Let $U \subset X$ be a neighbourhood of $0$. Since scalar multiplication is continuous  and $\cdot (0,\; 0) =0,$ there exists a neighbourhood $V$ of $0$ and $\epsilon > ' 0$ such that whenever $|\gamma|_k <' \epsilon$, we have $\gamma V \subset U$. Let $  M = \cup_{|\gamma|_k < ' \epsilon} \gamma V.$  Then $M$ is a neighbourhood of $0$ and $M \subset U$. To show that $M$ is $\mathbb{BC}$-balanced, let $x \in M$ and $|\lambda|_k \leq ' 1.$ Then $x = \gamma y$, for some $y \in V$. Since $|\lambda \gamma |_k = |\lambda |_k | \gamma |_k <' \epsilon$, it follows that $\lambda x = \lambda \gamma y \in M.$\\\\
$(iii)$  Let $U \subset X$ be a $\mathbb{BC}$-convex neighbourhood of $0$ and $  B = \cap_{|\gamma|_k = 1} \gamma U.$  Then  by $(ii)$, there is a $\mathbb{BC}$-balanced  neighbourhood say $M$ of $0$ such that $M \subset U$. For any $ \gamma \in \mathbb{BC}$ with $ |\gamma|_k = 1,$ we have by $\mathbb{BC}$-balancedness of $M$ that  $\gamma^{-1}M =M,$ so $M \subset  \gamma U$. Therefore $ M \subset B$, so $0 \in B^o \subset U$. We now show that $B^o$ is $\mathbb{BC}$-convex and $\mathbb{BC}$-balanced. Clearly,  $\gamma U$ is $\mathbb{BC}$-convex. It then follows from Lemma \ref{lemma15a} that $B$ is $\mathbb{BC}$-convex and so is $B^o$  by Theorem \ref{thm16ag}.  Since $\gamma U$ is $\mathbb{BC}$-convex set containing $0$, for any $\lambda \in \mathbb{D}^+ $, $ 0 \leq ' \lambda \leq '1$, we have  $\lambda  \gamma U \subset  \gamma U.$ Thus, for any $ \beta \in \mathbb{BC}$ with  $|\beta|_k = 1$  $$ \lambda \beta B = \cap_{|\gamma|_k = 1}\lambda \beta  \gamma U  = \cap_{|\gamma|_k = 1}\lambda  \gamma U  \subset  \cap_{|\gamma|_k = 1} \gamma U  = B.$$ This shows that $B$ is $\mathbb{BC}$-balanced. In view of Theorem \ref{thm16ah}   $B^o$ is also  $\mathbb{BC}$-balanced.
\end{proof}
\end{section}


\begin{section}{Hyperbolic-valued Seminorms}
The hyperbolic-valued seminorms in bicomplex modules have been studied in  \cite [Section 3] {Hahn}. In this section, we investigate  some properties of hyperbolic-valued seminorms in bicomplex modules as well as in topological  bicomplex modules.
\begin{definition}\label{def4}  \cite [Definition 2] {Hahn} Let $X$ be a $\mathbb{BC}$-module. Then a function $p \; : X \longrightarrow \mathbb{D} $ is said to be a hyperbolic-valued (or $\mathbb{D}$-valued) seminorm if for any $x, y \in X$ and $\lambda \in \mathbb{BC},$ the following properties hold:
 \begin{enumerate}
\item[(i)] $p(\lambda x)= |\lambda|_k p(x)$.
\item[(ii)] 	 $p( x + y ) \leq ^{'} p( x ) + p(y).$
\end{enumerate}
\end{definition} 

\begin{theorem}\label{thm5} Let $p$ be a $\mathbb{D}$-valued seminorm on a $\mathbb{BC}$-module $X$. Then for any $x, y \in X$, the following properties hold:
\begin{enumerate}
\item[(i)] $p(0) \; = \; 0.$
\item[(ii)] $|p(x)-p(y)|_{k} \leq ' p(x-y).$ 
\item[(iii)] $p(x) \geq ' 0$.
\item[(iv)] $ \{ x \; : \; p(x) =0 \}$ is a $\mathbb{BC}$-submodule of $X$.
\end{enumerate}
\end{theorem}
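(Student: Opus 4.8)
The plan is to prove the four properties in the order listed, since each one feeds into the next. These are the standard seminorm facts, now carried out with the hyperbolic-valued codomain $\mathbb{D}$ and the partial order $\leq'$ replacing the usual real-valued setup. Throughout I would use only the two defining axioms from Definition \ref{def4} together with the properties of $|\cdot|_k$ and $\leq'$ recorded in the preliminaries.

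For $(i)$, I would apply axiom $(i)$ of Definition \ref{def4} with $\lambda = 0$: since $0 \cdot x = 0$ and $|0|_k = 0$, we get $p(0) = p(0 \cdot x) = |0|_k\, p(x) = 0$. For $(iii)$, the key idea is to write $0 = x + (-x)$ and apply the triangle inequality $(ii)$ of Definition \ref{def4} together with part $(i)$ just proved: from $0 = p(0) \leq' p(x) + p(-x)$ and the observation that $p(-x) = |-1|_k\, p(x) = p(x)$ (again by axiom $(i)$, since $|-1|_k = 1$), we obtain $0 \leq' 2p(x)$, hence $p(x) \geq' 0$. Here I should be careful that dividing by $2$ is legitimate in the ordered setting; since $2 \in \mathbb{D}^+$ is invertible, $2p(x) \in \mathbb{D}^+$ forces $p(x) \in \mathbb{D}^+$.

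For $(ii)$, the reverse triangle inequality, I would run the usual two-sided argument adapted to $\leq'$. Writing $x = (x-y) + y$ and applying the triangle inequality gives $p(x) \leq' p(x-y) + p(y)$, hence $p(x) - p(y) \leq' p(x-y)$; symmetrically, $p(y) - p(x) \leq' p(y-x) = p(x-y)$, using $p(x-y) = |-1|_k\,p(y-x) = p(y-x)$. The step that needs genuine care is concluding $|p(x) - p(y)|_k \leq' p(x-y)$ from the two inequalities $\pm(p(x)-p(y)) \leq' p(x-y)$. I would pass to idempotent components: writing $p(x) - p(y) = e_1\alpha_1 + e_2\alpha_2$ with $\alpha_1,\alpha_2 \in \mathbb{R}$, the inequalities $\pm(p(x)-p(y)) \leq' p(x-y)$ translate componentwise into $\pm\alpha_l \le (p(x-y))_l$ for $l = 1,2$, i.e.\ $|\alpha_l| \le (p(x-y))_l$; since $|p(x)-p(y)|_k = e_1|\alpha_1| + e_2|\alpha_2|$ by definition of the hyperbolic-valued modulus, this is exactly $|p(x)-p(y)|_k \leq' p(x-y)$. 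This componentwise reduction is where the argument differs substantively from the classical scalar proof and is the main thing to get right.

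For $(iv)$, let $N = \{x : p(x) = 0\}$ and verify closure under addition and bicomplex scalar multiplication. If $p(x) = p(y) = 0$, then $0 \leq' p(x+y) \leq' p(x) + p(y) = 0$ by $(iii)$ and the triangle inequality, and antisymmetry of $\leq'$ forces $p(x+y) = 0$, so $x + y \in N$. For $\lambda \in \mathbb{BC}$ and $x \in N$, axiom $(i)$ gives $p(\lambda x) = |\lambda|_k\, p(x) = |\lambda|_k \cdot 0 = 0$, so $\lambda x \in N$; this simultaneously handles absorption of scalars and, taking $\lambda = 0$, membership of the origin. Hence $N$ is a $\mathbb{BC}$-submodule. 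I expect $(ii)$ to be the only step requiring real thought; $(i)$, $(iii)$, and $(iv)$ are short once the codomain-$\mathbb{D}$ bookkeeping is handled consistently.
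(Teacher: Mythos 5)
Your proposal is correct and follows essentially the same route as the paper: part (i) from the homogeneity axiom with $\lambda=0$, the two one-sided estimates combined into the reverse triangle inequality for (ii), and the kernel argument for (iv) via subadditivity and homogeneity. The only cosmetic differences are that the paper derives (iii) as a consequence of (ii) (via $p(x)=p(x-0)\geq' |p(x)-p(0)|_k=|p(x)|_k\geq' 0$) rather than from $0=p(0)\leq' 2p(x)$, and that you make explicit the componentwise step turning $\pm(p(x)-p(y))\leq' p(x-y)$ into $|p(x)-p(y)|_k\leq' p(x-y)$, which the paper leaves implicit.
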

\begin{proof}
$(i)$   Using Definition \ref{def4}(i), we get 
\begin{eqnarray*} p(0)\; &=& \; p(0 \cdot x)\\
&=& \; |0|_k p( x)\\
&=& \; 0.
\end{eqnarray*}\\
$(ii)$ Using Definition \ref{def4}(ii), for any $x, y \in X$, we have
\begin{eqnarray*} p(x)\; & =& \; p(x-y+y)\\
&\leq '& \;  p( x-y) + p(y).
\end{eqnarray*}
Therefore
\begin{equation}\label{eq6} p(x) - p(y)\; \leq ' \; p(x-y).
\end{equation}
Again, \begin{eqnarray*} p(y)\; & =& \; p(y-x+x)\\
&\leq '& \;  p( y-x) + p(x)\\
&=& \; |-1|_k \; p(x-y) + p(x)\\
&=& \; p(x-y) + p(x),
\end{eqnarray*} which implies that
\begin{equation}\label{eq7} p(y) - p(x)\;  \leq ' \; p(x-y).
\end{equation}
Thus from equations (\ref{eq6}) and (\ref{eq7}), we obtain
$|p(x)-p(y)|_{k} \leq ' p(x-y).$\\\\
$(iii)$ By using $(ii)$ and then $(i)$, we obtain 
\begin{eqnarray*} p(x)\; & =& \; p(x-0)\\
&\geq '& \; | p(x) -p(0)|_k.\\
& = & |p(x)|_k \\
& \geq ' & 0.
\end{eqnarray*}\\
$(iv)$ Let  $ \lambda, \gamma \in \mathbb{BC}, $ and $x, y \in X$ such that $p(x)= p(y) =0$. Then 
\begin{eqnarray*}
  p( \lambda x + \gamma y)\;  &\leq ' &  \; |\lambda|_k p(x) + | \gamma |_k p(y)  \\
& = & 0.
\end{eqnarray*}
 Therefore, by $(iii)$, it follows that $  p( \lambda x + \gamma y) =0. $
\end{proof} 

\begin{remark} \label{rem8}Every $\mathbb{D}$-valued norm on a $\mathbb{BC}$-module is a  $\mathbb{D}$-valued seminorm. However, the converse is not true in general. Here is an example:
\end{remark}
\begin{example}\label{exm9}
Define a function $p \; : \; \mathbb{BC}  \longrightarrow \mathbb{D}$ by $$ p(Z)\; = \; |z_1|\; e_1, \;\;\;\;\; \textmd{for each}\; Z = z_1 e_1 + z_2 e_2 \in \mathbb{BC}.$$ Then $p$ is a $\mathbb{D}$-valued seminorm on $\mathbb{BC}$. In fact,\\
$(i)$ for any  $Z = z_1 e_1 + z_2 e_2, \; W= w_1 e_1 + w_2 e_2 \in \mathbb{BC},$ we have
\begin{eqnarray*} p(Z+W) &=& \;  |z_1 + w_1 | e_1\\
&\leq '&  ( |z_1| +| w_1 |) e_1  \\
&=&   |z_1| e_1+  | w_1 | e_1  \\
&=&  p(Z) + p(W); \;\textmd{and}
\end{eqnarray*}
$(ii)$ for any  $\lambda = \lambda_1 e_1 + \lambda_2 e_2, Z = z_1 e_1 + z_2 e_2 \in \mathbb{BC},$ we have
\begin{eqnarray*} p(\lambda Z) &=& \;  |\lambda_1 z_1| \; e_1\\
&=&   |\lambda_1|\; | z_1 | \; e_1 \\
&=&  (|\lambda_1| e_1 + |\lambda_2| e_2) \; |z_1| \;  e_1\\
&=& | \lambda|_k \; p(Z).
\end{eqnarray*}
Now $e_2 \not=0$, but $p(e_2) = 0$. Thus, $p$ is not a  $\mathbb{D}$-valued norm on $\mathbb{BC}$.
\end{example}
\begin{theorem} \label{thm10}
Let  $p$ be a  $\mathbb{D}$-valued seminorm on a topological $\mathbb{BC}$-module $X$. Denote the sets $\{ x \in X \; ; \; p(x) <' 1\}$ and $\{ x \in X \; ; \; p(x) \leq ' 1\}$ by $A$ and $C$ respectively. Then the following statements are equivalent:
\begin{enumerate}
\item[(i)] $p$ is continuous.
\item[(ii)]$A$ is open.
\item[(iii)] $ 0 \in A^{o}$.
\item[(iv)] $ 0 \in C^{o}$.
\item[(v)] $p$ is continuous at 0.
\item[(vi)] there exists a continuous  $\mathbb{D}$-valued seminorm $q$ on $X$ such that $p \leq 'q$.
\end{enumerate}
\end{theorem}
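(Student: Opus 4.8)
The plan is to prove the chain of equivalences in a cyclic fashion, which is the standard and most economical strategy for a long list of equivalent statements about seminorms. Specifically, I would establish the implications $(i)\Rightarrow(ii)\Rightarrow(iii)\Rightarrow(iv)\Rightarrow(v)\Rightarrow(i)$ as a single cycle, and then handle $(vi)$ separately by showing $(i)\Leftrightarrow(vi)$ (or, more precisely, $(i)\Rightarrow(vi)$ trivially by taking $q=p$, and $(vi)\Rightarrow(v)$ using the comparison $p\leq'q$). This keeps each individual implication short and mostly a matter of unwinding definitions.

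The routine implications come first. For $(i)\Rightarrow(ii)$, note that $A=p^{-1}(\{\alpha\in\mathbb{D}:\alpha<'1\})$ is the preimage of an open set in $\mathbb{D}^+$ under the continuous map $p$, hence open; here I would use the order topology on $\mathbb{D}$ coming from $\leq'$, equivalently the product topology under $\alpha=e_1\alpha_1+e_2\alpha_2$, so that $\{\alpha<'1\}$ corresponds to $\{\alpha_1<1,\alpha_2<1\}$ which is open. The implications $(ii)\Rightarrow(iii)$ and $(iii)\Rightarrow(iv)$ are immediate: $0\in A$ since $p(0)=0$ by Theorem \ref{thm5}(i), and $A\subset C$ gives $A^o\subset C^o$. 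For $(iv)\Rightarrow(v)$, I would use the reverse-triangle inequality from Theorem \ref{thm5}(ii), namely $|p(x)-p(x_0)|_k\leq' p(x-x_0)$, so continuity at $0$ (and in fact everywhere) reduces to controlling $p$ on a neighbourhood of $0$; given $0\in C^o$, the sets $\epsilon C$ for $\epsilon>'0$ form a neighbourhood base on which $p\leq'\epsilon$, using the scaling property $p(\epsilon x)=|\epsilon|_k p(x)=\epsilon p(x)$ for positive hyperbolic $\epsilon$ together with Lemma \ref{lemma3b}.

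The implication I expect to be the main obstacle is $(v)\Rightarrow(i)$, that is, upgrading continuity at $0$ to continuity everywhere. The natural tool is again the reverse triangle inequality $|p(x)-p(y)|_k\leq' p(x-y)$ from Theorem \ref{thm5}(ii); combined with the translation homeomorphism $T_y$ from Lemma \ref{lemma3a}, this should let me transport a neighbourhood base at $0$ to any point $y$ and conclude. The subtlety is that $\leq'$ is only a partial order, so I must argue the two idempotent components $p_1$ and $p_2$ (where $p=e_1p_1+e_2p_2$) simultaneously, checking that the hyperbolic modulus $|p(x)-p(y)|_k$ being dominated by $p(x-y)$ genuinely forces both real components to be small when $x-y$ lies in a suitable $\mathbb{D}$-neighbourhood of $0$. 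I would make this precise by passing to the idempotent decomposition and invoking Remark \ref{rem2}, which guarantees that $(X_l,\tau_l)$ is a topological $\mathbb{C}(i)$-vector space, so that the component seminorms behave like ordinary seminorms on vector spaces.

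Finally, for the clause involving $(vi)$: the direction $(i)\Rightarrow(vi)$ is trivial by choosing $q=p$, which is continuous by hypothesis and satisfies $p\leq'p$. For $(vi)\Rightarrow(v)$, I would argue that given a continuous $\mathbb{D}$-valued seminorm $q$ with $p\leq'q$, the set $\{x:q(x)<'\epsilon\}$ is an open neighbourhood of $0$ on which $p(x)\leq'q(x)<'\epsilon$; by the transitivity and compatibility of $\leq'$ with the hyperbolic order established in the preliminaries, this yields continuity of $p$ at $0$. Closing this last step completes the cycle and establishes the full equivalence.
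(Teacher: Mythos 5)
Your proposal is correct and follows essentially the same route as the paper: the same cycle $(i)\Rightarrow(ii)\Rightarrow(iii)\Rightarrow(iv)\Rightarrow(v)\Rightarrow(i)$ with $(vi)$ handled separately via $q=p$ and the domination $p\leq' q$, with the reverse triangle inequality $|p(x)-p(y)|_k\leq' p(x-y)$ doing the work in $(v)\Rightarrow(i)$ and $(vi)\Rightarrow(i)$. The only cosmetic differences are that you phrase $(i)\Rightarrow(ii)$ as a preimage-of-an-open-set argument where the paper runs an explicit $\epsilon$-neighbourhood argument, and you anticipate more difficulty in $(v)\Rightarrow(i)$ than actually arises (the componentwise reading of $\leq'$ makes it immediate).
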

\begin{proof} We will prove this theorem in the following way:
$$ (i) \Rightarrow (ii) \Rightarrow (iii) \Rightarrow (iv) \Rightarrow (v) \Rightarrow (i) \;  \textmd{and} \; (i) \Leftrightarrow (vi) $$

$(i)  \Rightarrow  (ii):$ Let $ x_0 \in A$.  Choose $\epsilon > ' 0$ such that $p(x_0 ) + \epsilon < ' 1. $  Then the set $V= \{\lambda \geq ' 0 \; : \; | \lambda - p(x_0)  |_k < ' \epsilon \}$ is  a neighbourhood of $p(x_0)$ in $ \mathbb{D}^+$. Since $p$ is continuous at $x_0$, there exists a neighbourhood $U$ of $x_0$ such that $p(U) \subset V.$  Now, let $x \in U$. Then $p(x) \in V$. This implies that $| p(x) - p(x_0)  |_k < ' \epsilon $. Thus  $p(x) - p(x_0)  < ' \epsilon $. Therefore $p(x) < '  p(x_0) + \epsilon < ' 1,$ so $x \in A.$ Hence $U \subset A$. This shows that $A$ is open.

$(ii)  \Rightarrow (iii)$  and $ (iii)  \Rightarrow (iv)$ are obvious.

$(iv)  \Rightarrow (v) :$ Let $\epsilon >' 0$ and $\{x_l\}$ be a net in $X$ such that $x_l \rightarrow 0$ as $l \rightarrow \infty$. Then by (iv), there is an $l_0$ such that for $l \geq l_0$, we have $p(x_l) \leq ' \epsilon$. This shows that $p$ is continuous at 0.

$(v)  \Rightarrow (i) :$ Suppose $x_l \rightarrow x$. Then 
\begin{equation}\label{eq11}
|p(x_l)-p(x)|_{k} \leq ' p(x_l -x).
\end{equation} Since $x_l - x\rightarrow 0$ and $p$ is continuous at 0, it follows that $p(x_l - x)\rightarrow 0$. Equation (\ref{eq11}) implies that $p(x_l) \rightarrow p(x)$. Hence $p$ is continuous.

Again $ (i)  \Rightarrow (vi)$ is obvious.

$ (vi)  \Rightarrow (i) : $  Suppose $x_l \rightarrow x$. Then, since $x_l - x\rightarrow 0$ and $q$ is continuous, we obtain  $q(x_l - x)\rightarrow 0$. Now, $0 \leq ' p(x_l - x)  \leq ' q(x_l - x) $, so $p(x_l - x)\rightarrow 0$. Again, inequality (\ref{eq11}) yields  $p(x_l) \rightarrow p(x)$, proving that $p$ is continuous at $x$.
\end{proof}
\begin{theorem}\label{thm18}
Let $X$ be a $\mathbb{BC}$-module and $p$ be a   hyperbolic-valued seminorm on $X$. Then, $ \{ x \in X \; : \; p(x) < ' 1 \}$ and $ \{ x \in X \; : \; p(x)  \leq  ' 1 \}$  are $\mathbb{BC}$-convex , $\mathbb{BC}$-balanced and $\mathbb{BC}$-absorbing subsets of $X$.
\end{theorem}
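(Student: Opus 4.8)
The plan is to verify each of the three properties directly, turning every hyperbolic inequality into a pair of real inequalities through the idempotent representation. I will write $p(x) = e_1\alpha_1 + e_2\alpha_2$ with $\alpha_1,\alpha_2 \geq 0$, the nonnegativity coming from Theorem \ref{thm5}(iii); then $p(x) <' 1$ means $\alpha_1 <1$ and $\alpha_2 <1$, while $p(x)\leq' 1$ means $\alpha_1\leq 1$ and $\alpha_2\leq 1$. Set $A = \{x : p(x) <' 1\}$ and $C = \{x : p(x)\leq' 1\}$. Two facts will be used throughout: for $\lambda = e_1\lambda_1 + e_2\lambda_2 \in \mathbb{BC}$ one has $|\lambda|_k = e_1|\lambda_1| + e_2|\lambda_2|$, and for a positive hyperbolic number $\lambda \in \mathbb{D}^+$ the components are already nonnegative reals, so $|\lambda|_k = \lambda$.

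First I would treat $\mathbb{BC}$-balancedness. Given $x \in A$ and $\lambda \in \mathbb{BC}$ with $|\lambda|_k \leq' 1$ (so $|\lambda_1|,|\lambda_2|\leq 1$), Definition \ref{def4}(i) gives $p(\lambda x) = |\lambda|_k p(x) = e_1|\lambda_1|\alpha_1 + e_2|\lambda_2|\alpha_2$; since $|\lambda_1|\alpha_1 \leq \alpha_1 <1$ and similarly in the second slot, $p(\lambda x) <' 1$ and $\lambda x \in A$. Reading $\leq$ for $<$ handles $C$ verbatim.

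Next, for $\mathbb{BC}$-convexity I would take $x,y$ in the set and $\lambda\in\mathbb{D}^+$ with $0\leq'\lambda\leq' 1$, noting that $1-\lambda\in\mathbb{D}^+$ as well. The subadditivity of Definition \ref{def4}(ii) combined with the two facts above yields
\begin{equation*}
p(\lambda x + (1-\lambda)y) \leq' |\lambda|_k\,p(x) + |1-\lambda|_k\,p(y) = \lambda\,p(x) + (1-\lambda)\,p(y).
\end{equation*}
Component by component the right-hand side is a convex combination of two reals each below $1$ (resp. at most $1$), hence is itself below $1$ (resp. at most $1$); thus the point lies in $A$ (resp. $C$).

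Finally, for $\mathbb{BC}$-absorbingness I would fix $x$ and exploit that $p(x) = e_1\alpha_1 + e_2\alpha_2$ has only nonnegative components: taking $\epsilon = e_1(\alpha_1+1)^{-1} + e_2(\alpha_2+1)^{-1} >' 0$, any $\lambda$ with $0\leq'\lambda\leq'\epsilon$ satisfies $\lambda_j\alpha_j \leq \alpha_j/(\alpha_j+1) <1$ in each component, so $p(\lambda x) <' 1$ and $\lambda x \in A \subseteq C$. The only place demanding real care is the convexity step: one must confirm that $|\lambda|_k$ and $|1-\lambda|_k$ genuinely collapse to $\lambda$ and $1-\lambda$, and that a convex combination of reals strictly below $1$ remains strictly below $1$ — the strict inequality for $A$ being where a hasty argument could slip.
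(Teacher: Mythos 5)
Your proof is correct and follows essentially the same route as the paper's: homogeneity plus subadditivity for $\mathbb{BC}$-convexity, homogeneity alone for $\mathbb{BC}$-balancedness, and the choice $\epsilon = (p(x)+1)^{-1}$ (the paper takes a general $\alpha >' p(x)$ and sets $\epsilon = 1/\alpha$) for $\mathbb{BC}$-absorbingness. Your componentwise unpacking of the hyperbolic inequalities is just a more explicit rendering of the same argument, and it does justify the strict inequality in the convexity step more carefully than the paper's one-line estimate $\lambda p(x) + (1-\lambda)p(y) <' \lambda + (1-\lambda)$, which is imprecise when a component of $\lambda$ equals $0$ or $1$.
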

\begin{proof} Let $ A\; =\; \{ x \in X \; : \; p(x) < ' 1 \}$ and $x, y \in A$.  Let $\lambda \in \mathbb{D}^{+}$ such that $0 \leq ' \lambda \leq ' 1$. Then 
\begin{eqnarray*}
p(\lambda x + (1 - \lambda ) y ) & \leq ' & \; p(\lambda x) + p((1 - \lambda ) y )  \\
&=& \; |  \lambda |_k  p(x) \; +\; |(1- \lambda) |_k  p(y) \\
&=&    \lambda  p(x) \; +\;( 1- \lambda)  p(y) \\
& < ' & \;   \lambda  \; +\; (1- \lambda) \\
&=& \; 1,
\end{eqnarray*}
proving that $A$ is  $\mathbb{BC}$-convex. Now, let $x \in A$ and $\lambda \in \mathbb{BC}$ with $|\lambda|_k \leq ' 1$. Then,
$ p(\lambda x )  =  \; |  \lambda |_k  p(x)
 \leq '  \;  p(x) 
 < '  \; 1. $
This shows that $A$ is  $\mathbb{BC}$-balanced. Let $x \in X$. Choose $\alpha > ' 0$ such that $\alpha > ' p(x).$  Set $\epsilon = 1/ \alpha$. Then, for any $\lambda$ satisfying  $ 0 \; \leq ' \;  \lambda \;  \leq ' \;  \epsilon $, we have $ p( \lambda x ) =    | \lambda |_k  p(x)  =  \lambda   p(x) \leq ' p(x) / \alpha  <' 1 $. Thus $A$ is $\mathbb{BC}$-absorbing. Similarly, it can be shown that the set $\{ x \in X \; : \; p(x)  \leq  ' 1 \}$ is $\mathbb{BC}$-convex , $\mathbb{BC}$-balanced and $\mathbb{BC}$-absorbing. 
\end{proof}
\end{section}


\begin{section}{Hyperbolic-valued Minkowski Functionals}
In this section, we define hyperbolic-valued Minkowski functionals in  bicomplex modules and it has been shown that a hyperbolic-valued Minkowski functional of a bicomplex balanced, bicomplex convex and bicomplex absorbing set turns out to be a   hyperbolic-valued seminorm. Hyperbolic-valued Minkowski functionals in  hyperbolic modules have been studied in \cite[Section 9]{Hahn}.
\begin{definition}\label{def19}
Let $B$ be a $\mathbb{BC}$-convex, $\mathbb{BC}$-absorbing subset of a $\mathbb{BC}$-module $X$. Then, the  mapping $q_B \; : \;  X \longrightarrow \mathbb{D}^{+}$ defined by $$ q_B (x) \; = \; \inf_{\mathbb{D}} \{ \alpha >' 0 \; : \; x \in \alpha B  \}, \;\;\; \textmd{for each} \; x \in X,$$ is called hyperbolic-valued  gauge or  hyperbolic-valued Minkowski functional of $B$. 
\end{definition}
\begin{theorem}\label{thm20}
Let  $B$ be a $\mathbb{BC}$-convex, $\mathbb{BC}$-balanced, $\mathbb{BC}$-absorbing subset of a $\mathbb{BC}$-module $X$. Then the $\mathbb{D}$-valued  gauge $q_B$ is a  $\mathbb{D}$-valued seminorm on $X$.
\end{theorem}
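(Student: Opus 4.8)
The plan is to push everything down to the idempotent components, where $q_B$ decouples into a pair of ordinary (complex) Minkowski functionals, and then import the classical fact that the Minkowski functional of a convex, balanced, absorbing set in a vector space is a seminorm. Since $B$ is $\mathbb{BC}$-convex, Theorem \ref{thm16ab} gives $B = e_1 B + e_2 B$, while Theorems \ref{thm13}, \ref{thm15} and \ref{thm17a} show that $e_1 B$ and $e_2 B$ are balanced, convex and absorbing subsets of the $\mathbb{C}(i)$-vector spaces $e_1 X$ and $e_2 X$ respectively. Let $\mu_l \colon e_l X \to [0,\infty)$ denote the ordinary Minkowski functional of $e_l B$, so that by the classical theory each $\mu_l$ is a seminorm on $e_l X$; in particular $\mu_l(a\,x_l) = |a|\,\mu_l(x_l)$ for $a \in \mathbb{C}(i)$ (absolute homogeneity, valid because $e_l B$ is balanced) and $\mu_l(x_l + y_l) \le \mu_l(x_l) + \mu_l(y_l)$.

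The crucial step is the decomposition formula
$$ q_B(x) = e_1 \mu_1(x_1) + e_2 \mu_2(x_2), \qquad x = e_1 x_1 + e_2 x_2 . $$
To prove it, fix $\alpha = e_1\alpha_1 + e_2\alpha_2$ with $\alpha >' 0$, so that $\alpha_1,\alpha_2 > 0$. Using $B = e_1 B + e_2 B$ one checks that $x \in \alpha B$ if and only if $e_1 x_1 \in \alpha_1 (e_1 B)$ and $e_2 x_2 \in \alpha_2 (e_2 B)$, and these two conditions are independent of one another. Consequently the admissible-scalar set $S = \{\alpha >' 0 \;:\; x \in \alpha B\}$ consists exactly of the numbers $e_1\alpha_1 + e_2\alpha_2$ with $\alpha_1 \in S_1$ and $\alpha_2 \in S_2$, where $S_l = \{\beta > 0 \;:\; e_l x_l \in \beta(e_l B)\}$ is precisely the family whose infimum defines $\mu_l(x_l)$; both $S_1$ and $S_2$ are nonempty because $e_l B$ is absorbing. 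Hence the two component-projections of $S$ appearing in the definition of the $\mathbb{D}$-infimum are exactly $S_1$ and $S_2$, and therefore $q_B(x) = \inf_{\mathbb{D}} S = e_1 \inf S_1 + e_2 \inf S_2 = e_1\mu_1(x_1) + e_2\mu_2(x_2)$, as claimed.

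With this formula the two seminorm axioms reduce to componentwise statements. For $\lambda = e_1\lambda_1 + e_2\lambda_2 \in \mathbb{BC}$ we have $\lambda x = e_1(\lambda_1 x_1) + e_2(\lambda_2 x_2)$, so absolute homogeneity of $\mu_1,\mu_2$ together with $|\lambda|_k = e_1|\lambda_1| + e_2|\lambda_2|$ yields
$$ q_B(\lambda x) = e_1 |\lambda_1|\,\mu_1(x_1) + e_2 |\lambda_2|\,\mu_2(x_2) = |\lambda|_k \, q_B(x) . $$
Likewise, writing $y = e_1 y_1 + e_2 y_2$ and using that $\leq'$ on $\mathbb{D}$ is nothing but comparison of both idempotent components, subadditivity of each $\mu_l$ gives
$$ q_B(x+y) = e_1 \mu_1(x_1 + y_1) + e_2 \mu_2(x_2 + y_2) \leq' e_1\bigl(\mu_1(x_1)+\mu_1(y_1)\bigr) + e_2\bigl(\mu_2(x_2)+\mu_2(y_2)\bigr) = q_B(x) + q_B(y) . $$
As $q_B$ is valued in $\mathbb{D}^{+}$ by construction, this proves that $q_B$ is a $\mathbb{D}$-valued seminorm.

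The step I expect to be the main obstacle is the decomposition formula, specifically the clean separation of $S$ into the two independent families $S_1$ and $S_2$. This rests on two points that deserve care: that $\alpha >' 0$ corresponds exactly to positivity of both components $\alpha_1,\alpha_2$, and that membership $x \in \alpha B$ genuinely decouples — the latter being where $\mathbb{BC}$-convexity is indispensable through $B = e_1 B + e_2 B$ (for a merely absorbing, non-convex $B$ this decoupling can fail). Once the product structure is in place, matching it against the definition of $\inf_{\mathbb{D}}$ is routine, and the remainder is a componentwise transcription of the classical argument. A fully self-contained variant would bypass the classical black box by arguing directly in $\mathbb{D}$: subadditivity from the implication $x \in sB,\; y \in tB \Rightarrow x+y \in (s+t)B$ (apply $\mathbb{BC}$-convexity to $\tfrac{s}{s+t}$ and $\tfrac{t}{s+t}$) and homogeneity from Theorem \ref{thm12a}, but the zero-divisor scalars would again drive one back to the componentwise viewpoint.
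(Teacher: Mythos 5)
Your proof is correct, but it takes a genuinely different route from the paper's. The paper works directly in the hyperbolic setting: subadditivity is obtained by the classical $\epsilon$-perturbation argument (pick $u,v\in B$ with $x=(\alpha+\epsilon)u$, $y=(\gamma+\epsilon)v$ and apply $\mathbb{BC}$-convexity to the weights $\frac{\alpha+\epsilon}{\alpha+\gamma+2\epsilon}$, $\frac{\gamma+\epsilon}{\alpha+\gamma+2\epsilon}$), and homogeneity is proved by a case split on whether $\lambda$ is invertible (using Theorem \ref{thm12a} to replace $\lambda B$ by $|\lambda|_k B$) or lies in the null cone (reducing to balancedness of $e_1B$ or $e_2B$). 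You instead establish the single decomposition formula $q_B(x)=e_1\mu_1(x_1)+e_2\mu_2(x_2)$, with $\mu_l$ the classical Minkowski functional of $e_lB$ in $e_lX$, and then read off both axioms componentwise from the classical theory. Your decoupling step is sound: $B=e_1B+e_2B$ (Theorem \ref{thm16ab}) is exactly what makes $x\in\alpha B$ equivalent to the two independent component conditions, the projections of the admissible set are then precisely $S_1$ and $S_2$, and the paper's componentwise definition of $\inf_{\mathbb{D}}$ finishes the identification; your reading of $\alpha>'0$ as strict positivity of both components agrees with the paper's usage (e.g.\ in the proof of Theorem \ref{thm17a}). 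What your approach buys is the elimination of the invertible/null-cone case analysis and a reusable structural identity for $q_B$ in terms of the component gauges; what it costs is reliance on the classical Minkowski-functional theorem as a black box, whereas the paper's argument is self-contained (at the price of being longer and, in the subadditivity step, quietly using that $q_B(x)=\alpha$ forces $x\in(\alpha+\epsilon)B$, a monotonicity fact that your product description makes explicit).
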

\begin{proof}
Let $x, y \in X$ such that $q_B (x) = \alpha$ and $q_B (y) = \gamma$. Then for any $\epsilon >'  0$, we have $x \in (\alpha + \epsilon )B $ and  $y \in (\gamma + \epsilon )B.$   Therefore, we can find $u, v \in B$ such that $x=  (\alpha + \epsilon ) u $ and $y = (\gamma + \epsilon ) v$. Observe that $ 0 < ' \frac{\alpha + \epsilon}{\alpha + \gamma+ 2 \epsilon} < ' 1$ and  $  0 < ' \frac{\gamma + \epsilon}{\alpha + \gamma+ 2 \epsilon} < ' 1$.  Therefore, by $\mathbb{BC}$-convexivity of $B$, we have $$ \left( \frac{\alpha + \epsilon}{\alpha + \gamma+ 2 \epsilon}\right)  u + \left( \frac{\gamma + \epsilon}{\alpha + \gamma+ 2 \epsilon}\right)  v \in B.$$ That is, $$ \frac{(\alpha + \epsilon) u + (\gamma+ \epsilon) v}{\alpha + \gamma + 2 \epsilon}  \in B.$$ This implies that $ (\alpha + \epsilon) u + (\gamma+ \epsilon) v \in (\alpha + \gamma +  2 \epsilon)  B$ which yields that $x \; + \; y \in (\alpha + \gamma +  2 \epsilon)  B.$ Letting $\epsilon \rightarrow 0$, we obtain $q_B (x+y)  \leq ' \alpha + \gamma =  q_B (x) + q_B (y).$
We now show that $q_B (\lambda x) \; = \; |\lambda|_k q_B $ for each $x \in X$ and $\lambda \in \mathbb{BC}$. Clearly, $ q_B (0) =0$. So, we assume that  $\lambda \not= 0$. Let $x \in X$. We first consider the case when $\lambda \in \mathbb{BC} \setminus \mathcal {NC}$. Since $B$ is $\mathbb{BC}$-balanced, by Theorem \ref{thm12a}, we have 
\begin{eqnarray*}
q_B (\lambda x) & =  & \;   \inf_{\mathbb{D}} \{ \alpha >' 0 \; : \;\lambda x \in \alpha B  \} \\
&=& \;  \inf_{\mathbb{D}} \left\{ \alpha >' 0 \; : \; x \in \alpha \left( \frac{1}{ \lambda} B \right)\right\} \\
&=& \;  \inf_{\mathbb{D}} \left\{ \alpha >' 0 \; : \; x \in \alpha \left( \frac{1}{ |\lambda|_k } B \right)\right\} \\
&=& \; |\lambda|_k \inf_{\mathbb{D}} \left\{\frac{\alpha}{ |\lambda|_k }  \; : \; x \in  \frac{\alpha}{ |\lambda|_k } B \right\} \\
&=& \; |\lambda|_k \; q_B ( x).
\end{eqnarray*}
Now, suppose $\lambda \in  \mathcal {NC}$. Then either $\lambda = \lambda_1 e_1$ or  $\lambda = \lambda_2 e_2$. We consider the case  $\lambda = \lambda_1 e_1$, the other one follows on similar lines.  Now, since $B$ is $\mathbb{BC}$-balanced set in $X$, it follows that $e_1 B$ is a balanced set in $\mathbb{C}(i)$-vector space $e_1 X$. Hence, 
\begin{eqnarray*}
q_B (\lambda x) & =  & \;   \inf_{\mathbb{D}} \{ \alpha >' 0 \; : \;\lambda x \in \alpha B  \} \\
& =  & \;   \inf_{\mathbb{D}} \{ \alpha  e_1 \; : \;\lambda_1 e_1  x \in \alpha e_1 B  \} \\
& =  & \;   \inf_{\mathbb{D}}  \left\{ \alpha  e_1 \; : \; e_1  x \in \alpha \left( \frac{1}{ \lambda_1} e_1 B \right) \right\} \\
& =  & \;   \inf_{\mathbb{D}}  \left\{ \alpha  e_1 \; : \; e_1  x \in \alpha \left( \frac{1}{| \lambda_1|} e_1 B \right) \right\} \\
& =  & \;   \inf_{\mathbb{D}}  \left\{ \alpha  e_1 \; : \; e_1  x \in  \frac{ \alpha}{| \lambda_1|} e_1 B   \right\} \\
& =  & \;  | \lambda_1| e_1 \inf_{\mathbb{D}}  \left\{ \frac{ \alpha}{| \lambda_1|}  \; : \;   x \in  \frac{ \alpha}{| \lambda_1|}  B   \right\} \\
& =  & \;  | \lambda_1| e_1 \; q_B (x)\\
& =  & \;  | \lambda|_k \; q_B (x).
\end{eqnarray*}
This completes the proof.
\end{proof}

\begin{definition}Let $(X, \tau )$ be a topological $\mathbb{BC}$-module. Then a subset $B \subset X$ is said to be bounded if for each neighbourhood $U$  of $0$, there exists $ \lambda >' 0 $ such that $B \subset \lambda U$.
\end{definition}
\begin{corollary}
Let  $B$ be a bounded $\mathbb{BC}$-convex, $\mathbb{BC}$-balanced,  $\mathbb{BC}$-absorbing subset of a topological $\mathbb{BC}$-module $(X, \tau)$. Then, $q_B$ is a  $\mathbb{D}$-valued norm on $X$.
\end{corollary}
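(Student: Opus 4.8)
The plan is to lean on Theorem \ref{thm20}, which already guarantees that $q_B$ is a $\mathbb{D}$-valued seminorm; this means that the homogeneity property $q_B(zx)=|z|_k\,q_B(x)$ and the triangle inequality hold automatically, together with $q_B(0)=0$. Comparing with the definition of a $\mathbb{D}$-valued norm, the only missing ingredient is the definiteness condition, and since $q_B(0)=0$ is immediate the entire burden reduces to showing that $q_B(x)=0$ forces $x=0$. Here is where the hypothesis of boundedness, unused in Theorem \ref{thm20}, must enter.

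I would argue by contradiction, assuming $x\neq 0$. Since $\tau$ is Hausdorff there is a neighbourhood of $0$ omitting $x$, and by the result of Section 2 that every neighbourhood of $0$ contains a $\mathbb{BC}$-balanced neighbourhood of $0$, I may take a $\mathbb{BC}$-balanced neighbourhood $U$ of $0$ with $x\notin U$. Because $B$ is bounded, there is a fixed $\lambda>'0$ with $B\subset\lambda U$. The goal is then to show that $q_B(x)=0$ nevertheless places $x$ inside $U$.

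The heart of the matter is to convert $q_B(x)=0$ into the statement that $x\in\alpha B$ for a positive hyperbolic scalar $\alpha$ of arbitrarily small $\mathbb{D}$-modulus. Writing $\alpha=\alpha_1 e_1+\alpha_2 e_2$ and using $B=e_1B+e_2B$ (Theorem \ref{thm16ab}, valid since $B$ is $\mathbb{BC}$-convex), one checks that $\alpha B=\alpha_1(e_1B)+\alpha_2(e_2B)$, so that $x\in\alpha B$ decouples into $e_1x\in\alpha_1 e_1B$ and $e_2x\in\alpha_2 e_2B$. Reading the definition $q_B(x)=\inf_{\mathbb{D}}\{\alpha>'0:x\in\alpha B\}$ componentwise through the $\mathbb{D}$-infimum, and noting that the absorbing property (Theorem \ref{thm17a}) keeps each component index set nonempty, the vanishing of $q_B(x)$ says exactly that $\inf\{\alpha_1>0:e_1x\in\alpha_1 e_1B\}=0$ and that the analogous infimum for the $e_2$-part is $0$. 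Hence, for any prescribed $\delta>0$, I can select $\alpha_1,\alpha_2\in(0,\delta)$ with $e_1x\in\alpha_1 e_1B$ and $e_2x\in\alpha_2 e_2B$, giving $x\in\alpha B$ with $\alpha=\alpha_1 e_1+\alpha_2 e_2$, $0<'\alpha$ and $|\alpha|_k$ as small as desired.

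Finally I would combine the two ingredients: from $x\in\alpha B$ and $B\subset\lambda U$ we get $x\in\alpha\lambda U$, where $|\alpha\lambda|_k=|\alpha|_k|\lambda|_k$. Choosing $\delta$ small enough that $|\alpha\lambda|_k\leq'1$ (possible since $\lambda$ is fixed while both components of $|\alpha|_k$ stay below $\delta$), the $\mathbb{BC}$-balancedness of $U$ gives $\alpha\lambda U\subset U$, whence $x\in U$, contradicting $x\notin U$. Therefore $x=0$, which establishes definiteness and shows $q_B$ is a $\mathbb{D}$-valued norm. I expect the main obstacle to be the middle step, namely extracting a single positive hyperbolic scalar $\alpha$ with both idempotent components small and $x\in\alpha B$; this rests on manipulating the $\mathbb{D}$-infimum componentwise and on the decoupling $\alpha B=\alpha_1 e_1B+\alpha_2 e_2B$ supplied by $\mathbb{BC}$-convexity, rather than on any delicate topological estimate.
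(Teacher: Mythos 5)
Your proposal is correct. Note that the paper states this corollary without proof, so there is nothing to compare against; what you have written supplies the missing argument, and it is the natural one: Theorem \ref{thm20} gives the seminorm properties, and boundedness plus the Hausdorff axiom (built into Definition \ref{def1}) force definiteness. The one step that genuinely needs care in the hyperbolic setting is the one you correctly flag as the heart of the matter: $q_B(x)=0$ is a statement about a componentwise $\mathbb{D}$-infimum, and in general a set $A\subset\mathbb{D}^+$ can have $\inf_{\mathbb{D}}A=0$ without containing any single element whose two idempotent components are simultaneously small. Your resolution --- that $\{\alpha>'0:x\in\alpha B\}$ decouples into a product of two real index sets because $B=e_1B+e_2B$ (Theorem \ref{thm16ab}) gives $\alpha B=\alpha_1e_1B+\alpha_2e_2B$, with nonemptiness of each factor coming from Theorem \ref{thm17a} --- closes this gap properly, and the final contradiction via $x\in\alpha\lambda U\subset U$ for a $\mathbb{BC}$-balanced $U$ omitting $x$ is sound. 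The argument is complete as written.
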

The next result follows from Theorem \ref{thm18} and Theorem \ref{thm20}.
\begin{theorem}\label{thm21}
Let  $B$ be a $\mathbb{BC}$-convex, $\mathbb{BC}$-balanced,  $\mathbb{BC}$-absorbing subset of a $\mathbb{BC}$-module $X$ and $q_B$ be the  $\mathbb{D}$-valued  gauge on $B$. Then, both  the subsets 
$ \{x \in X \; | \; q_B (x) < ' 1  \} $ and $ \{  x \in X \; | \; q_B (x) \leq  ' 1  \} $ of $X$ are  $\mathbb{BC}$-convex, $\mathbb{BC}$-balanced and  $\mathbb{BC}$-absorbing.
\end{theorem}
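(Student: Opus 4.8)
The plan is to recognize that this statement is an immediate consequence of the two results already established, so that the whole argument reduces to checking that the hypotheses of those results line up correctly. Indeed, the remark preceding the statement already signals this: the theorem is meant to follow from Theorem \ref{thm18} together with Theorem \ref{thm20}.

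First I would invoke Theorem \ref{thm20}. Since $B$ is assumed to be simultaneously $\mathbb{BC}$-convex, $\mathbb{BC}$-balanced, and $\mathbb{BC}$-absorbing, Theorem \ref{thm20} guarantees that its hyperbolic-valued gauge $q_B$ is not merely a Minkowski functional but a genuine $\mathbb{D}$-valued seminorm on $X$. This is the precise upgrade needed: the full trio of hypotheses on $B$ is exactly what converts the gauge into an object satisfying properties $(i)$ and $(ii)$ of Definition \ref{def4}.

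Next I would apply Theorem \ref{thm18} with the seminorm $p := q_B$. That theorem asserts that, for any hyperbolic-valued seminorm $p$ on a $\mathbb{BC}$-module, both sublevel sets $\{x \in X \; : \; p(x) < ' 1\}$ and $\{x \in X \; : \; p(x) \leq ' 1\}$ are $\mathbb{BC}$-convex, $\mathbb{BC}$-balanced, and $\mathbb{BC}$-absorbing. Since the two sets named in the statement are exactly these sublevel sets with $p = q_B$, all three desired properties follow at once for each of them, which completes the argument.

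There is no genuine obstacle here, as the mathematical content is carried entirely by Theorems \ref{thm20} and \ref{thm18}; the result is essentially a corollary obtained by composition. The only point meriting (minor) care is to confirm that Theorem \ref{thm20} really delivers a seminorm rather than just a gauge of a convex absorbing set, so that $q_B$ is a legitimate input to Theorem \ref{thm18}; this is precisely why the added hypothesis that $B$ is $\mathbb{BC}$-balanced must be present in the statement.
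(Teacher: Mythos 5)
Your proposal is correct and is exactly the paper's argument: the paper states this result with the single sentence ``The next result follows from Theorem \ref{thm18} and Theorem \ref{thm20}'' and gives no further proof, which is precisely the composition you describe (Theorem \ref{thm20} upgrades $q_B$ to a $\mathbb{D}$-valued seminorm, and Theorem \ref{thm18} applied with $p = q_B$ yields all three properties of both sublevel sets). Your added remark that the $\mathbb{BC}$-balanced hypothesis is what makes Theorem \ref{thm20} applicable is the right point of care.
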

\begin{theorem}\label{thm24}
Let $(X, \tau )$ be a topological $\mathbb{BC}$-module, $B$ be a $\mathbb{BC}$-convex, $\mathbb{BC}$-balanced,  $\mathbb{BC}$-absorbing subset of $X$ and $q_B$ be the  $\mathbb{D}$-valued  gauge on $B$. Let us denote $ \{x \in X \; : \; q_B (x) < ' 1  \} $ and $\{  x \in X \; : \; q_B (x) \leq  ' 1  \} $ by $A_{B}$ and $C_{B}$ respectively. Then, the following statements hold:
\begin{enumerate}
\item[(i)] $B^{o} \subset A_{B} \subset B \subset C_{B} \subset \overline{B}$.
\item[(ii)] If $B$  is open, then $  B =  A_{B} $.
\item[(iii)]  If $B$  is closed, then $  B = C_{B}$.
\item[(iv)] If  $q_B$ is continuous, then  $B^{o} = A_{B}$.
\end{enumerate} 
\end{theorem}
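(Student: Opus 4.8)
The plan is to establish the chain of inclusions in (i) first and then read off (ii)--(iv) as formal consequences. The organizing principle throughout is the idempotent splitting. Since $B$ is $\mathbb{BC}$-convex, Theorem \ref{thm16ab} gives $B = e_1 B + e_2 B$, and because $e_1 e_2 = 0$, for any $\alpha = e_1 \alpha_1 + e_2 \alpha_2 >' 0$ one has $\alpha B = \alpha_1 e_1 B + \alpha_2 e_2 B$. By uniqueness of the idempotent decomposition this means that $x \in \alpha B$ holds if and only if $e_1 x \in \alpha_1 e_1 B$ and $e_2 x \in \alpha_2 e_2 B$ hold simultaneously. Consequently the index set $\{\alpha >' 0 : x \in \alpha B\}$ is a product in the $e_1, e_2$ coordinates, and by the definition of the $\mathbb{D}$-infimum $q_B(x) = e_1 \mu_1(e_1 x) + e_2 \mu_2(e_2 x)$, where $\mu_l$ is the ordinary (complex) Minkowski functional of $e_l B$, which is convex, balanced and absorbing in $e_l X$ by Theorems \ref{thm15}, \ref{thm13} and \ref{thm17a}. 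This reduction to the two component spaces is the tool I would use repeatedly.

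For (i) I would argue the four inclusions separately. For $A_B \subset B$: if $q_B(x) <' 1$ then both components of $q_B(x)$ are $< 1$, so using the product structure of the index set I can select a single scalar $\alpha$ with $0 <' \alpha <' 1$ and $x \in \alpha B$; writing $x = \alpha b$ with $b \in B$ and invoking $\mathbb{BC}$-convexity together with $0 \in B$ gives $x = \alpha b + (1 - \alpha) 0 \in B$. The inclusion $B \subset C_B$ is immediate, since $x \in 1 \cdot B$ forces $q_B(x) \leq' 1$. For $B^o \subset A_B$: the map $\beta \mapsto \beta x$ from $\mathbb{BC}$ to $X$ is continuous and sends $1$ into the open set $B^o$, so it carries some $\beta = e_1(1 + \delta_1) + e_2(1 + \delta_2)$ with $\delta_1, \delta_2 > 0$ into $B^o \subset B$; then $x \in \beta^{-1} B$ with $0 <' \beta^{-1} <' 1$, whence $q_B(x) \leq' \beta^{-1} <' 1$. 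Finally, for $C_B \subset \overline{B}$: given $q_B(x) \leq' 1$ and $t_n = (1 - 1/n)\mathbf{1}$, the seminorm property (Theorem \ref{thm20}) gives $q_B(t_n x) = t_n\, q_B(x) \leq' t_n <' 1$, so $t_n x \in A_B \subset B$, and $t_n x \to x$ by continuity of scalar multiplication, placing $x$ in $\overline{B}$.

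Parts (ii)--(iv) then follow formally from (i). If $B$ is open then $B = B^o \subset A_B \subset B$, so $B = A_B$; if $B$ is closed then $B \subset C_B \subset \overline{B} = B$, so $B = C_B$. For (iv), since $q_B$ is a $\mathbb{D}$-valued seminorm by Theorem \ref{thm20}, its continuity is equivalent by Theorem \ref{thm10} to $A_B$ being open; then $A_B \subset B$ together with openness forces $A_B \subset B^o$, and combined with $B^o \subset A_B$ from (i) this yields $B^o = A_B$.

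The step I expect to be the main obstacle is the inclusion $A_B \subset B$, specifically the passage from the scalar inequality $q_B(x) <' 1$ to the existence of one genuine hyperbolic scalar $\alpha$ with $x \in \alpha B$ and $\alpha <' 1$. Because $<'$ is only a partial order and $q_B$ is a $\mathbb{D}$-valued infimum, one cannot simply transcribe the real-variable argument; the product (box) structure of $\{\alpha >' 0 : x \in \alpha B\}$ in idempotent coordinates, established in the first paragraph, is exactly what makes the componentwise selection legitimate, and I would take care to justify it before using it.
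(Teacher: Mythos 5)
Your proof is correct, and its skeleton (prove the chain in (i), read off (ii)--(iv) formally, using Theorem \ref{thm10} for (iv)) coincides with the paper's. Two of your steps, however, genuinely diverge, and both are improvements in rigour or economy. First, for $A_B \subset B$ the paper simply asserts that $q_B(x_0) <' 1$ yields a single $\alpha$ with $0 <' \alpha <' 1$ and $x_0 \in \alpha B$ ``by the definition of the gauge''; since the $\mathbb{D}$-infimum is taken componentwise over a set that is only partially ordered, this step is not automatic, and your observation that $\{\alpha >' 0 : x \in \alpha B\}$ is a product (box) in the idempotent coordinates --- a consequence of $B = e_1 B + e_2 B$ from Theorem \ref{thm16ab} --- is exactly the missing justification; you then finish with convexity plus $0 \in B$ where the paper uses balancedness ($\alpha B \subset B$), an immaterial difference. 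Second, for $C_B \subset \overline{B}$ the paper splits into four cases according to which components of $q_B(x_0)$ equal $1$, choosing a different approximating sequence $a_n$ in each case; your single sequence $t_n = (1 - \tfrac{1}{n})\mathbf{1}$ works uniformly, since $q_B(t_n x) = (1-\tfrac{1}{n})\,q_B(x)$ has both idempotent components bounded by $1 - \tfrac{1}{n} < 1$ whenever $q_B(x) \leq' 1$, so $t_n x \in A_B \subset B$ and $t_n x \to x$. What your route buys is a cleaner, gap-free argument; what the paper's route buys is only that it stays closer to the classical one-variable template. The remaining inclusions ($B^o \subset A_B$ via continuity of scalar multiplication at $1$, and $B \subset C_B$ trivially) and parts (ii)--(iv) are argued identically in both.
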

\begin{proof} $(i)$  Let $x_0 \in B^{o}.$ Then, there exists a neighbourhood $V$ of $x_0$ such that $x_0 \in V \subset B$. Since scalar multiplication is continuous, there exists $ \epsilon > ' 0$  and a neighbourhood $U$ of $x_0$ such that $\gamma x_0 \in V$ whenever $| \gamma - 1 |_k <' \epsilon$. It then follows that $x_0 \in \frac{1}{\gamma} B $ whenever $0 <' \gamma - 1 <'  \epsilon.$ Therefore, $q(x_0) \leq ' \frac{1}{\gamma} < ' 1,$ so $x_0 \in A_{B}$. This proves  that $B^{o} \subset  A_{B}$. Let $x_0 \in  A_{B}$.  Then, by Definition \ref{def19}, there exists $\alpha  $ satisfying $ 0< ' \alpha < ' 1$ such that $x_0 \in \alpha B$. Since $B$ is $\mathbb{BC}$-balanced, it follows that $\alpha B \subset B$. Thus, $x_0 \in B$. This shows that $ A_{B} \subset B$. Clearly $x_0 \in B$ implies that  $q_B (x_0) \leq  ' 1$. Thus  $B \subset C_{B}$.  Now, let $x_0 \in C_{B}$.  Then $q(x_0) \leq ' 1$ and we have the following cases: \\
Case $(i)$: If $q(x_0) <' 1,$ then $x_0 \in  A_{B} \subset  B \subset  \overline{B}$.\\
Case $(ii)$:  If $q(x_0)  = e_ 1 + r e_2,$ \; $0 \leq r <1$,  then take $a_n = (1 - \frac{1}{n})e_ 1 + e_2$, $n \in \mathbb{N}.$ We have \begin{eqnarray*}
q(a_n x_0) &=& |a_n|_k q(x_0)\\
&=&\left( (1 - \frac{1}{n})e_ 1 + e_2 \right) \left( e_ 1 + r e_2  \right)\\
&=&  (1 - \frac{1}{n})e_ 1 +  r  e_2 \\
&  < ' & 1.
\end{eqnarray*}
Therefore, $ a_n x_0  \in  A_{B} \subset B.$  Since $\lim_{n \rightarrow \infty} a_n x_0 = \lim_{n \rightarrow \infty} \left[ (1 - \frac{1}{n})e_ 1 + e_2 \right] x = (e_ 1 + e_2) x_0 =x_0, $ it follows that $x_0 \in  \overline{B}$.\\
Case $(iii)$:  If $q(x_0)  =  r e_ 1 + e_2,$ \; $0 \leq r <1$, we take $a_n =e_ 1 +  (1 - \frac{1}{n}) e_2$, $n \in \mathbb{N}.$ Then, similarly as in case $(ii)$ above, we obtain $x_0 \in  \overline{B}$.\\
Case $(iv)$:  If $q(x_0)  = 1$, take  $a_n = 1 - \frac{1}{n}$. Therefore 
\begin{eqnarray*}
q(a_n x_0) &=& |a_n|_k q(x_0)\\
&=& 1 - \frac{1}{n} \\
&  < ' & 1,
\end{eqnarray*}
which shows that $ a_n x_0  \in  A_{B} \subset B.$  Now, $\lim_{n \rightarrow \infty} a_n x_0 = \lim_{n \rightarrow \infty} (1 - \frac{1}{n}) x_0 =x_0, $ implies that $x_0 \in  \overline{B}$. Thus, in all the above cases $ C_{B}\subset \overline{B}$.\\\\
$(ii)$ If $B$ is open, then $B^{o} =B.$ Therefore, from $(i)$, we get $  B =  A_{B} $.\\\\
$(iii)$ If  $B$ is open, then $\overline{B} =B.$ Now, again using $(i)$, we obtain $  B = C_{B}$.\\\\
$(iv)$ Suppose, $q_B$ is continuous. Then by Theorem \ref{thm10},  $ A_{B} $ is an open set. Since $B^{o}$ is the largest open set containing $B$ and  $ A_{B} \subset B$, it follows that $ A_{B} \subset$ $B^{o}$. From $(i)$, we obtain $B^{o} = A_{B}$.
\end{proof}
\end{section}

\begin{section}{Locally Bicomplex Convex Modules}
In this section, we introduce the bicomplex version of locally convex  topological  spaces. Hyperbolic metrizable and hyperbolic normable locally bicomplex convex modules have also been studied.  Results in this section are analogous to the results on topological vectors spaces as given in \cite{JJ}, \cite{LA} and \cite{rudin}. We begin this section with the following definition.
\begin{definition}
Let $X$ be a  $\mathbb{BC}$-module and $\mathcal{P}$ be  a family of $\mathbb{D}$-valued seminorms on $X$. Then the family  $\mathcal{P}$ is said to be separated  if for each $x \not=0$, there exists  $p \in \mathcal{P}$ such that  $p(x) \not=0.$ 
\end{definition}
 We define a topology on a $\mathbb{BC}$-module $X$ determined by the family $\mathcal{P}$  of $\mathbb{D}$-valued seminorms on $X$   as follows:\\ 
For $x \in X, \epsilon >' 0$ and $p \in \mathcal{P}$, we set $$ U(x, \epsilon, p) = \{y \in X \; : \; p(y-x) <' \epsilon \};$$ and for $x \in X,  \epsilon >' 0$ and $ p_1, p_2,\;.\;.\;.\;, p_n \in \mathcal{P}$, set $$U(x, \epsilon , p_1, p_2,\;.\;.\;.\;, p_n ) = \{ y \in X \; : \; p_1 (y-x) <' \epsilon , p_2 (y-x) <' \epsilon ,\;.\;.\;.\;, p_n (y-x) <' \epsilon \}.$$  Let $\mathcal{U_{P}}(x) = \{ U(x, \epsilon , p_1, p_2,\;.\;.\;.\;, p_n )   : \epsilon>' 0, \;  p_1, p_2,\;.\;.\;.\;, p_n \in \mathcal{P} \; \textmd{and}\; n \in \mathbb{N}\}$. Then,  $\mathcal{U_{P}} = \{\mathcal{U_{P}}(x) \; : \; x \in X \} = \cup_{x \in X}\mathcal{U_{P}}(x) $ forms a base for a topology $\tau_{\mathcal{P}}$ on $X$, called the topology generated by the family $\mathcal{P}$.

\begin{theorem} Let  $ \mathcal{P} $ be a separated family of $\mathbb{D}$-valued seminorms on $X$. Then  $ (X, \tau_{ \mathcal{P}} )$ is a topological $\mathbb{BC}$-module.
\end{theorem}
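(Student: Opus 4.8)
The plan is to verify the three requirements of Definition \ref{def1}: that $\tau_{\mathcal{P}}$ is Hausdorff, and that addition and scalar multiplication are continuous. I would work throughout with the basic neighbourhoods $U(x,\epsilon,p_1,\dots,p_n)$ and use only the two defining properties of a $\mathbb{D}$-valued seminorm (Definition \ref{def4}) together with their consequences in Theorem \ref{thm5}, chiefly $p(x)\geq ' 0$ and the triangle inequality. A useful preliminary observation is the idempotent splitting $p(x)=e_1 p_1(e_1 x)+e_2 p_2(e_2 x)$, in which $p_1,p_2$ are real-valued seminorms on $X_1=e_1X$ and $X_2=e_2X$: this converts each hyperbolic inequality $p(y-x)<'\epsilon$ into a pair of real inequalities on the two idempotent components, which is exactly what lets me handle the partial order $<'$.

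For the Hausdorff property, let $x\neq y$. By separatedness choose $p\in\mathcal{P}$ with $p(x-y)\neq 0$; since $p(x-y)\geq ' 0$ by Theorem \ref{thm5}(iii) I may write $p(x-y)=e_1 c_1+e_2 c_2$ with $c_1,c_2\geq 0$ not both zero, say $c_1>0$. I would then set $\epsilon=e_1(c_1/2)+e_2$, which is $>'0$, and claim $U(x,\epsilon,p)\cap U(y,\epsilon,p)=\emptyset$: a common point $w$ would give, via the triangle inequality and $p(x-w)=p(w-x)$, the chain $p(x-y)\leq ' p(x-w)+p(w-y)<'2\epsilon$, whose first component reads $c_1<c_1$, a contradiction. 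The subtlety to flag here is precisely that $p(x-y)\neq 0$ does not force $p(x-y)>'0$, so the separating $\epsilon$ must be tailored to whichever idempotent component of $p(x-y)$ is nonzero.

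Continuity of addition is the standard halving argument: given $U(x+y,\epsilon,p_1,\dots,p_n)$, I take $U(x,\epsilon/2,p_1,\dots,p_n)$ and $U(y,\epsilon/2,p_1,\dots,p_n)$, and the triangle inequality applied to each $p_j$ on $(u-x)+(v-y)$ yields $p_j(u+v-(x+y))<'\epsilon$. For scalar multiplication, given $\lambda_0,x_0$ and a target $U(\lambda_0 x_0,\epsilon,p_1,\dots,p_n)$, I start from the estimate $p_j(\lambda x-\lambda_0 x_0)\leq ' |\lambda|_k\,p_j(x-x_0)+|\lambda-\lambda_0|_k\,p_j(x_0)$, obtained from the triangle inequality and homogeneity. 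I then bound $|\lambda|_k\leq ' |\lambda_0|_k+|\lambda-\lambda_0|_k$, shrink the $\mathbb{BC}$-neighbourhood of $\lambda_0$ to make the second summand $<'\epsilon/2$, and shrink the $X$-neighbourhood $U(x_0,\eta,p_1,\dots,p_n)$ to make the first $<'\epsilon/2$, choosing $\eta$ one idempotent component at a time.

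The main obstacle is that $<'$ and $\leq '$ are partial orders dictated by the zero divisors, so all ``smallness'' must be arranged in both idempotent components simultaneously, and the components on which $p_j(x_0)$ vanishes need separate handling (there any positive component of the $\mathbb{BC}$-radius works). For this reason I expect the cleanest write-up to reduce everything, through the splitting $p=e_1p_1+e_2p_2$, to the statement that $\tau_{\mathcal{P}}$ is the product of the two locally convex topologies that $\{p_1\}$ and $\{p_2\}$ induce on $X_1$ and $X_2$; each of these families separates points by applying the Hausdorff argument to the elements $e_1x$ and $e_2x$, and since the $\mathbb{BC}$-action splits as $\lambda x=e_1\lambda_1 x_1+e_2\lambda_2 x_2$, continuity of $+$ and $\cdot$ then follows componentwise from the classical theory of topological vector spaces.
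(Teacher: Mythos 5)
Your direct verification is correct and follows essentially the same route as the paper: a case analysis on the idempotent components of $p(x-y)$ to get the Hausdorff property (your single tailored $\epsilon=e_1(c_1/2)+e_2$ packages the paper's three cases into one), the standard $\epsilon/2$ argument for addition, and the product estimate $p_j(\lambda x-\lambda_0x_0)\leq' |\lambda|_k\,p_j(x-x_0)+|\lambda-\lambda_0|_k\,p_j(x_0)$ for scalar multiplication, where the paper instead uses the three-term splitting $(\lambda-\lambda_0)(x-x_0)+\lambda_0(x-x_0)+x_0(\lambda-\lambda_0)$ and a quadratic choice of $\delta$ --- a cosmetic difference. Your proposed ``cleanest write-up,'' reducing everything through $p=e_1p_1+e_2p_2$ to the product of the two classical locally convex topologies on $X_1$ and $X_2$, is a genuinely different packaging that the paper does not use; it buys the continuity of $+$ and $\cdot$ for free from the scalar theory, but it requires you to verify explicitly that $\tau_{\mathcal P}$ coincides with the product topology (i.e.\ that the basic neighbourhoods $U(x,\epsilon,p_1,\dots,p_n)$ factor as products of component neighbourhoods, which they do since $\epsilon>'0$ means both components are positive) and that each component family inherits separatedness --- both true, and consistent with the paper's Remark \ref{rem2}. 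Either version is a complete proof.
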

\begin{proof} 
Let $x, y \in (X, \tau_{ \mathcal{P}} ) $ such that $x \not= y$. Then, there exists $p \in \mathcal{P}$ such that $p(x-y) \not= 0.$ Let $ p(x-y) = \epsilon = \epsilon_1 e_1 + \epsilon_2 e_2$. There arise the following three cases: \\
Case $(i)$: If $\epsilon_1 > 0 , \epsilon_2 > 0,$ take $\gamma = \epsilon/3$. Then $U(x, \gamma, p) = \{ z \; : \; p(x-z) < ' \gamma \} $ and  $V(y, \gamma, p) = \{ z \; : \; p(y-z) < ' \gamma\}$   are neighbourhoods of $x$ and $y$ in $ (X, \tau_{ \mathcal{P}} )$.  We shall show that $U \cap V \not= \phi.$ If possible, let $z_0 \in U \cap V$. Then $p(x- z_0) < ' \gamma$ and $p(y-z_0) < ' \gamma$. We have
\begin{eqnarray*}
p(x-y) & \leq ' &  p(x- z_0) + p(y- z_0)\\
& < '& \gamma + \gamma \\
&=& 2 \gamma,
\end{eqnarray*}
i.e., $\epsilon < ' 2 \gamma$, which is absurd. Thus, $U$ and $V$ are disjoint neighbourhoods of $x$ and $y$ in $ (X, \tau_{ \mathcal{P}} )$.\\
Case $(ii)$: If $\epsilon_1 >  0 , \epsilon_2 = 0,$ take $ \gamma = \frac{\epsilon_1}{3} e_1 + r e_2$, where $r >0$.  Then $U(x, \gamma, p) = \{ z \; : \; p(x-z) < ' \gamma \}$ and  $V(y, \gamma, p) = \{ z \; : \; p(y-z) < ' \gamma\} $  are neighbourhoods of $x$ and $y$ in $ (X, \tau_{ \mathcal{P}} )$. Now, let $z_0 \in U \cap V$. Then $p(x- z_0) < ' \gamma$ and $p(y-z_0) < ' \gamma$. Therefore
\begin{eqnarray*}
p(x-y) & \leq ' &  p(x- z_0) + p(y- z_0)\\
& < '& \gamma + \gamma \\
&=& 2 \gamma,
\end{eqnarray*}
i.e., $\epsilon_1 e_1 < ' 2 \frac{\epsilon_1}{3} e_1 + 2 r e_2$, which is not possible. Thus, $U$ and $V$ are disjoint neighbourhoods of $x$ and $y$ in $ (X, \tau_{ \mathcal{P}} )$.\\
Case $(iii)$: If $\epsilon_1 = 0 , \epsilon_2 > 0,$ take $ \gamma = s  e_1 + \frac{\epsilon_2}{3}e_2$, where $s > 0$. Then as in case $(ii)$,  one easily checks that $U(x, \gamma, p) = \{ z \; : \; p(x-z) < ' \gamma \}$ and  $V(y, \gamma, p) = \{ z \; : \; p(y-z) < ' \gamma\} $  are disjoint neighbourhoods of $x$ and $y$. This shows that  $ (X, \tau_{ \mathcal{P}} )$ is a Hausdorff space.
 We now show that  $+\; : X \times X \longrightarrow X$ is continuous. For this, let $(x_0 , \; y_0) \in X \times X$ and $ U(x_0 + y_0, \epsilon , p_1, p_2,\;.\;.\;.\;p_n )$ be a  neighbourhood of  $x_0 +  \; y_0 $ in  $ (X, \tau_{ \mathcal{P}} )$. Then $U(x_0, \epsilon /2 , p_1, p_2,\;.\;.\;.\;p_n ) \times U( y_0, \epsilon /2 , p_1, p_2,\;.\;.\;.\;p_n )$ is a  neighbourhood of  $(x_0 , \; y_0) $ in  the product topology of $X \times X$. Let $(x, y) \in U(x_0, \epsilon /2 , p_1, p_2,\;.\;.\;.\;p_n ) \times U( y_0, \epsilon /2 , p_1, p_2,\;.\;.\;.\;p_n )$. Then $p_i (x - x_0) <' \epsilon /2$ and $p_i (y - y_0) <' \epsilon /2$ for each $i= 1,2,\;.\;.\;.\;, n.$ Therefore, for $i= 1,2,\;.\;.\;.\;, n,$ we have 
\begin{eqnarray*}
p_i [(x+y) -(x_0 +y_0)] &=& p_i [(x- x_0) + (y -y_0)]\\
 & \leq '&  p_i (x- x_0) + p_i (y -y_0) \\
 & < '&  \epsilon /2 +  \epsilon /2 \\
 & = & \epsilon.
\end{eqnarray*}
That is, $+ (x, \; y) \in U(x_0 + y_0, \epsilon , p_1, p_2,\;.\;.\;.\;p_n )$, proving the continuity of  $+\; : X \times X \longrightarrow X$ at $( x_0,  y_0)$. Since  $( x_0,  y_0)$ is arbitrary, it follows that $+\; : X \times X \longrightarrow X$ is continuous. Now, to show that  $ \cdot\; : \mathbb{BC} \times X \longrightarrow X$ is continuous, let $(\lambda_0, x_0) \in  \mathbb{BC} \times X$ and $ U(\lambda_0 x_0, \epsilon , p_1, p_2,\;.\;.\;.\;p_n )$ be a  neighbourhood of  $\lambda_0 x_0  $ in  $ (X, \tau_{ \mathcal{P}} )$. Choose $\delta > ' 0$ such that $\delta^{2} +(|\lambda_0|_k + p_i(x_0)) \delta < ' \epsilon$ for each $i= 1,2,\;.\;.\;.\;, n.$  Let $U_{\delta} = \{ \lambda \in \mathbb{BC} \; : \; |\lambda  - \lambda_0|_k <' \delta \}$.  Then $ U_{\delta}  \times U(x_0, \delta, p_1, p_2,\;.\;.\;.\;p_n )$ is a  neighbourhood of $(\lambda_0, x_0) $ in   $ \mathbb{BC} \times X$. Let $(\lambda, x) \in  U_{\delta}  \times U(x_0, \delta, p_1, p_2,\;.\;.\;.\;p_n )$. We then have  for $i= 1,2,\;.\;.\;.\;, n,$
\begin{eqnarray*}
p_i (\lambda x - \lambda_0 x_0 ) &=&  p_i [ (\lambda - \lambda_0) (x - x_0 ) + \lambda_0 (x -x_0 ) + x_0 ( \lambda - \lambda_0) ] \\
& \leq ' &  p_i [ (\lambda - \lambda_0) (x - x_0 )  ]  +  p_i [  \lambda_0 (x -x_0 ) ]   +  p_i [  x_0 ( \lambda - \lambda_0)  ] \\
 &=&  |(\lambda - \lambda_0|_k p_i  (x - x_0 ) + |\lambda_0|_k p_i  (x - x_0 )  +   |(\lambda - \lambda_0|_k p_i (x_0) \\
& < ' & \delta^{2} +(|\lambda_0|_k + p_i(x_0)) \delta \\
& < ' & \epsilon.
\end{eqnarray*}
Therefore, $\cdot (\lambda, x) \in  U(\lambda_0 x_0, \epsilon , p_1, p_2,\;.\;.\;.\;p_n )$ showing that  $ \cdot\; : \mathbb{BC} \times X \longrightarrow X$ is continuous at $(\lambda_0, x_0)$. Since $(\lambda_0, x_0)$  is  arbitrary, it follows that $ \cdot\; : \mathbb{BC} \times X \longrightarrow X$ is continuous. Hence,  $ (X, \tau_{ \mathcal{P}} )$ is a topological $\mathbb{BC}$-module.
\end{proof}
 Proof of the following lemma is on the  similar lines as in  \cite [Lemma 2.5.1] {LA}, so we omit the proof.
\begin{lemma}\label{th28}
Let $X $ is a topological $\mathbb{BC}$-module and $\mathcal{P} = \{ p_n \}_{n \in \mathbb{N}}$ be  a family of  $\mathbb{D}$-valued seminorms on $X$.  For each $m \in \mathbb{N}$, define $q_m \; : \; X \rightarrow \mathbb{D}$ by  $$ q_m(x)  = \sup \{ p_1 (x),  p_2 (x), \; . \; . \; .\; ,  p_m (x) \}, \;\;\; \textmd{for each} \; x \in X.$$ Then,  $\mathcal{Q} = \{ q_m \}_{m \in \mathbb{N}}$  is a  family of $\mathbb{D}$-valued seminorms on $X$ such that the following hold:
\begin{enumerate}
\item[(i)]  $\mathcal{Q}$ is separated if $\mathcal{P}$ is so.
\item[(ii)] $q_m \leq ' q_{m+1},$ for each $m \in \mathbb{N}$.
\item[(iii)] $(X, \tau_{\mathcal{P}})$ and $(X, \tau_{\mathcal{Q}})$ are topologically isomorphic.
\end{enumerate}
\end{lemma}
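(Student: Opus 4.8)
The plan is to push everything through the idempotent decomposition, where each $\mathbb{D}$-valued seminorm splits into a pair of ordinary nonnegative real-valued functions. Writing $p_j(x) = e_1\, p_j(x)_1 + e_2\, p_j(x)_2$ with $p_j(x)_1, p_j(x)_2 \geq 0$, the $\mathbb{D}$-supremum defining $q_m$ becomes $q_m(x) = e_1 \max_{1\le j\le m} p_j(x)_1 + e_2 \max_{1\le j\le m} p_j(x)_2$. First I would verify that each $q_m$ is again a $\mathbb{D}$-valued seminorm. The homogeneity $q_m(\lambda x) = |\lambda|_k\, q_m(x)$ follows because $|\lambda|_k = e_1|\lambda_1| + e_2|\lambda_2|$ pulls through each component-maximum (as $|\lambda_l| \ge 0$), and the triangle inequality reduces to the elementary componentwise estimate $\max_j(a_j + b_j) \le \max_j a_j + \max_j b_j$ applied in each idempotent slot; since both slots stay nonnegative, $q_m$ takes values in $\mathbb{D}^+$.

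Parts (i) and (ii) are then immediate consequences of this translation. For (ii), $q_m \leq' q_{m+1}$ holds because each component-maximum over $\{1,\dots,m\}$ is dominated by the corresponding one over $\{1,\dots,m+1\}$, which is exactly the relation $\leq'$. For (i), if $\mathcal{P}$ is separated and $x \ne 0$, I would pick $p_n$ with $p_n(x) \ne 0$; then at least one idempotent component of $p_n(x)$ is positive, and for every $m \ge n$ the corresponding component of $q_m(x)$ dominates it, so $q_m(x) \ne 0$ and $\mathcal{Q}$ is separated.

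The substance of the lemma is (iii), for which I would show that $\tau_{\mathcal{P}}$ and $\tau_{\mathcal{Q}}$ are in fact the same topology on $X$, so the identity map serves as the topological isomorphism. One inclusion uses that $p_n \leq' q_m$ whenever $m \ge n$: given a basic $\tau_{\mathcal{P}}$-neighbourhood $U(x,\epsilon, p_{i_1},\dots,p_{i_k})$, set $N = \max\{i_1,\dots,i_k\}$ and observe $U(x,\epsilon, q_N) \subseteq U(x,\epsilon, p_{i_1},\dots,p_{i_k})$, so every $\tau_{\mathcal{P}}$-open set is $\tau_{\mathcal{Q}}$-open. For the reverse inclusion I would first collapse a finite list $q_{m_1},\dots,q_{m_k}$ to the single largest index $M$ using (ii), and then exploit the identity $U(x,\epsilon, q_M) = U(x,\epsilon, p_1,\dots,p_M)$: indeed $q_M(y-x) <' \epsilon$ says $\max_{1\le j\le M} p_j(y-x)_l < \epsilon_l$ for $l=1,2$, which holds precisely when $p_j(y-x)_l < \epsilon_l$ for every $j$ and every $l$, i.e. exactly when $p_j(y-x) <' \epsilon$ for all $j \le M$. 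This exhibits each $\tau_{\mathcal{Q}}$-basic neighbourhood as a $\tau_{\mathcal{P}}$-basic neighbourhood, giving $\tau_{\mathcal{Q}} \subseteq \tau_{\mathcal{P}}$ and hence equality.

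The only genuinely delicate point is this last identity, where the interplay between the $\mathbb{D}$-supremum and the strict order $<'$ must be handled slotwise: the equivalence that a finite maximum is $<' \epsilon$ if and only if each term is $<' \epsilon$ is precisely what lets the single seminorm $q_M$ encode the finite intersection of the balls for $p_1,\dots,p_M$. Everything else is the routine bookkeeping of the idempotent decomposition, so I expect no further obstacle once the componentwise translation is set up carefully.
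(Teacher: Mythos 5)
Your proof is correct and follows essentially the argument the paper intends: the paper omits the proof, referring to the classical real--seminorm case (Larsen, Lemma 2.5.1), and your reduction via the idempotent decomposition to componentwise maxima, together with the key identity $U(x,\epsilon,q_M)=U(x,\epsilon,p_1,\dots,p_M)$, is exactly the natural bicomplex adaptation of that argument. The only implicit step is the routine re-centering of basic neighbourhoods via the triangle inequality when passing from containment of basic sets to equality of the topologies, which is standard and not worth flagging as a gap.
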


\begin{definition}\label{def29}
A topological $\mathbb{BC}$-module $(X, \tau)$ is said to be locally bicomplex convex (or $\mathbb{BC}$-convex)  module  if it has a neighbourhood base at $0$ of  $\mathbb{BC}$-convex sets.
\end{definition}
Proof of the following theorem is on the  similar lines as in  \cite [Theorem 2.3.1] {LA}. For the sake of completeness, we give the proof.
\begin{theorem}\label{th30}
 A topological $\mathbb{BC}$-module $(X, \tau)$ is a locally $\mathbb{BC}$-convex module if and only if its topology is generated by   a separated  family $\mathcal{P}$ of  $\mathbb{D}$-valued seminorms on $X$.
\end{theorem}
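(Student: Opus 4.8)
The plan is to prove the two implications separately, using the Minkowski-functional machinery of Section~4 for the substantial direction.

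\emph{Sufficiency} (topology generated by seminorms $\Rightarrow$ locally $\mathbb{BC}$-convex). Suppose $\tau=\tau_{\mathcal{P}}$ for a separated family $\mathcal{P}$. The sets $U(0,\epsilon,p_1,\dots,p_n)=\bigcap_{i=1}^n\{\,y:p_i(y)<'\epsilon\,\}$ form a neighbourhood base at $0$. I would first note that each $\{\,y:p_i(y)<'\epsilon\,\}$ is $\mathbb{BC}$-convex by exactly the computation in Theorem~\ref{thm18} (with the threshold $1$ replaced by $\epsilon$), and then apply Lemma~\ref{lemma15a} to see that the finite intersection is $\mathbb{BC}$-convex. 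Since $\mathbb{BC}$-convexity is clearly preserved under translation, the general basic sets $U(x,\epsilon,p_1,\dots,p_n)$ are $\mathbb{BC}$-convex too. Hence $(X,\tau)$ has a neighbourhood base at $0$ of $\mathbb{BC}$-convex sets and is locally $\mathbb{BC}$-convex.

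\emph{Necessity} (locally $\mathbb{BC}$-convex $\Rightarrow$ topology generated by seminorms). Starting from a neighbourhood base at $0$ of $\mathbb{BC}$-convex sets, I would refine it to a base $\mathcal{B}$ of \emph{open}, $\mathbb{BC}$-convex, $\mathbb{BC}$-balanced sets: the Section~2 theorem (modelled on \cite[Theorem 1.14]{rudin}) shrinks each $\mathbb{BC}$-convex neighbourhood to a $\mathbb{BC}$-convex $\mathbb{BC}$-balanced one, and passing to interiors preserves these properties by Theorem~\ref{thm16ag}(ii) and Theorem~\ref{thm16ah} (the latter applies because $0$ lies in the interior of every neighbourhood of $0$). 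Each such $B$ is also $\mathbb{BC}$-absorbing by that same Section~2 theorem, so by Theorem~\ref{thm20} its gauge $q_B$ is a $\mathbb{D}$-valued seminorm. I then set $\mathcal{P}=\{\,q_B:B\in\mathcal{B}\,\}$.

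Two points then remain. First, $\mathcal{P}$ is separated: given $x\neq 0$, Hausdorffness supplies $B\in\mathcal{B}$ with $x\notin B$; since $B$ is open, Theorem~\ref{thm24}(ii) gives $B=\{\,y:q_B(y)<'1\,\}$, so the relation $q_B(x)<'1$ fails, and because $0<'1$ this forces $q_B(x)\neq 0$. Second, I must establish $\tau_{\mathcal{P}}=\tau$ by comparing neighbourhood bases at $0$: on one hand each $B\in\mathcal{B}$ equals $U(0,1,q_B)$ and is thus $\tau_{\mathcal{P}}$-open, giving $\tau\subseteq\tau_{\mathcal{P}}$; on the other hand, openness of $B$ makes $A_B=B$ open, so $q_B$ is $\tau$-continuous by Theorem~\ref{thm10}, whence each $U(0,\epsilon,q_{B_1},\dots,q_{B_n})$ is $\tau$-open, giving $\tau_{\mathcal{P}}\subseteq\tau$.

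The main obstacle I anticipate is the final equality $\tau_{\mathcal{P}}=\tau$, and specifically the careful bookkeeping of the hyperbolic order: one must verify that the strict relation $<'$ against a hyperbolic threshold $\epsilon$ still yields the open sets needed for the $\tau$-continuity of $q_B$, and that the order being only partial (rather than total) does not obstruct the separation step that passes from ``$q_B(x)<'1$ fails'' to $q_B(x)\neq 0$. Each of these reduces, via the idempotent decomposition $X=e_1X+e_2X$, to a componentwise real-variable statement where the classical locally convex theory applies.
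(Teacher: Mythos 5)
Your proposal is correct and follows essentially the same route as the paper's own proof: for necessity, Minkowski functionals of a neighbourhood base refined to $\mathbb{BC}$-convex, $\mathbb{BC}$-balanced, $\mathbb{BC}$-absorbing sets (separation via $x\notin B$), and for sufficiency, the direct seminorm-ball convexity computation. You supply a few details the paper leaves implicit --- the justification via the Section~2 theorem and Theorems~\ref{thm16ag}--\ref{thm16ah} that the base may be so refined, and the two-sided comparison establishing $\tau_{\mathcal{P}}=\tau$ --- but the underlying argument is the same.
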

\begin{proof} Suppose $(X, \tau)$ is a locally $\mathbb{BC}$-convex module. Let $\mathcal{B}$ be a neighbourhood base  at $0$ of  $\mathbb{BC}$-convex sets. Assume that each $B \in \mathcal{B}$ is $\mathbb{BC}$-balanced and $\mathbb{BC}$-absorbing and let $q_B$ be the $\mathbb{D}$-valued Minkowski functional of $B$. Then $\mathcal{P}= \{q_B \}$ is a family of $\mathbb{D}$-valued seminorms on $X$ such that for each $B$, $$ \{x \in X : q_B (x) <' 1 \} \subset B \subset \{x \in X : q_B (x) \leq ' 1   \}. $$ Therefore the topology on $X$ is generated by  $\mathcal{P}$. Let $0  \not= x \in X$.  Then, there exists $B \in \mathcal{B}$ such that $x \notin B$. For this $B$, we have $q_B(x) \geq ' 1$. Thus, $\mathcal{P}= \{q_B \}$ is a separated family. 

Conversely, suppose $\mathcal{P}$ is a separated family of  $\mathbb{D}$-valued seminorms on $X$ that generates the topology on $X$. Let $\{ U(0, \epsilon , p_1, p_2,\;.\;.\;.\;, p_n )   : \epsilon>' 0,   p_1, p_2,\;.\;.\;.\;, p_n \in \mathcal{P} \}$ be a neighbourhood base at $0$. Let $0 \leq' \lambda \leq' 1$ and $x, y \in U(0, \epsilon , p_1, p_2,\;.\;.\;.\;, p_n )$. Then, for each $m = 1, 2, \; .\; .\;. \; , n$, 
\begin{eqnarray*}
p_m (\lambda x + (1-\lambda) y) & \leq ' &  p_m (\lambda x ) + p_m( (1-\lambda) y) \\
&=&  \lambda  p_m ( x ) +  (1-\lambda)  p_m( y)\\
& < ' &  \lambda \epsilon + (1-\lambda) \epsilon \\
& =& \epsilon .
\end{eqnarray*}
This shows that $ U(0, \epsilon , p_1, p_2,\;.\;.\;.\;, p_n )$ is $\mathbb{BC}$-convex.
\end{proof}

\begin{definition}
Let   $d \; : \; X \times X \rightarrow  \mathbb{D}$ be a function such that for any $x, y, z \in X$, the following properties hold:
 \begin{enumerate}
\item[(i)] $d(x, y) \geq ' 0$  and  $d(x, y) = 0$ if and only if $x =y$,
\item[(ii)]  $ d(x, y) = d(y, x$),
\item[(iii)] $d(x, z) \leq' d(x, y) + d(y, z)$.
\end{enumerate}
Then $d$ is called a hyperbolic-valued (or  $\mathbb{D}$-valued) metric on $X$ and the pair $(X, d)$ is called  a   hyperbolic metric (or $\mathbb{D}$-metric) space.
\end{definition} The following result is easy to prove
\begin{lemma}
Every  $\mathbb{D}$-metric space is first countable.
\end{lemma}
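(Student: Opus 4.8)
The plan is to exhibit, at each point $x$ of a $\mathbb{D}$-metric space $(X,d)$, an explicit countable neighbourhood base built from hyperbolic balls of \emph{scalar} radius. Recall that the $\mathbb{D}$-metric topology has as a neighbourhood base at $x$ the family of balls $B(x,\epsilon)=\{y\in X : d(x,y) <' \epsilon\}$, where $\epsilon$ ranges over the strictly positive hyperbolic numbers $\epsilon >' 0$. Writing $\epsilon = \epsilon_1 e_1 + \epsilon_2 e_2$ in idempotent form, the condition $\epsilon >' 0$ amounts to $\epsilon_1 > 0$ and $\epsilon_2 > 0$, and the requirement $d(x,y) <' \epsilon$ unpacks componentwise in the idempotent decomposition; this is the structural fact I would record first.

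Next I would propose the candidate base $\mathcal{B}_x = \{ B(x, \tfrac1n) : n \in \mathbb{N}\}$, where $\tfrac1n$ denotes the hyperbolic number $\tfrac1n e_1 + \tfrac1n e_2$ (the real scalar $1/n$ viewed inside $\mathbb{D}$). This family is countable, so it remains only to check that it is a base at $x$. Given an arbitrary basic neighbourhood $B(x,\epsilon)$ with $\epsilon = \epsilon_1 e_1 + \epsilon_2 e_2$ and $\epsilon_1, \epsilon_2 > 0$, I would invoke the Archimedean property of $\mathbb{R}$ to choose $n \in \mathbb{N}$ with $1/n < \min\{\epsilon_1, \epsilon_2\}$, so that $\tfrac1n <' \epsilon$. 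Then for any $y$ with $d(x,y) <' \tfrac1n$ one has $d(x,y) <' \epsilon$ (since $a <' b$ together with $b \leq' c$ forces $a <' c$, as read off in each idempotent component), giving $B(x,\tfrac1n) \subseteq B(x,\epsilon)$. As every neighbourhood of $x$ contains some $B(x,\epsilon)$, the countable family $\mathcal{B}_x$ refines the full neighbourhood base and is therefore itself a base at $x$, whence first countability follows.

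The only place demanding care, and the step I would flag as the crux, is that a single natural number $n$ must dominate \emph{both} idempotent components of $\epsilon$ at once; a priori the hyperbolic radii form a two-parameter family and one might fear needing an uncountable index set. This worry dissolves upon applying the real Archimedean property to $\max\{1/\epsilon_1, 1/\epsilon_2\}$, which is precisely why the one-parameter family of scalar radii already suffices. An alternative route, which I would mention only as a remark, is to decompose $d = e_1 d_1 + e_2 d_2$ into the two real-valued pseudometrics $d_1, d_2$ read off from the components, observe that the $\mathbb{D}$-metric topology is the join of the two pseudometric topologies, and invoke first countability of each factor; the direct squeezing argument above is shorter and avoids having to justify the topology decomposition separately.
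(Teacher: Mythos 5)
The paper gives no proof of this lemma (it is dismissed with ``easy to prove''), and your argument is correct and is surely the intended one: the balls of scalar radius $\tfrac1n = \tfrac1n e_1 + \tfrac1n e_2$ form a countable base at each point because the Archimedean property lets a single $n$ satisfy $\tfrac1n < \min\{\epsilon_1,\epsilon_2\}$, hence $\tfrac1n <' \epsilon$ in the partial order. Your reading of $\epsilon >' 0$ as requiring both idempotent components strictly positive matches the convention used elsewhere in the paper (e.g.\ in the Hausdorff argument for $\tau_{\mathcal P}$), so the squeezing step $d(x,y) <' \tfrac1n <' \epsilon$ goes through without further qualification.
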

\begin{definition}
A  topological $\mathbb{BC}$-module $X$ is said to be hyperbolic metrizable (or  $\mathbb{D}$-metrizable)  if the topology on $X$ is generated by a $\mathbb{D}$-valued metric on $X$.
\end{definition}

\begin{definition}
A  topological $\mathbb{BC}$-module $X$ is said to be  hyperbolic normable (or $\mathbb{D}$-normable) if the topology on $X$ is generated by a $\mathbb{D}$-valued norm on $X$.
\end{definition}
 Proof of the following lemma is on the  similar lines as in the case of topological vector spaces ( see, e.g.,  \cite [Theorem 2.5.1] {LA}).
\begin{lemma}\label{lemma40}
 Let $ \mathcal{P} =\{p_n \}$ be a  countable separated family of $\mathbb{D}$-valued seminorms on a  topological $\mathbb{BC}$-module $(X, \tau)$  such that $p_n \leq ' p_{n+1}$ for each $n \in \mathbb{N}$.
Define a function  $d : X \times X  \rightarrow  \mathbb{D}$  by
$$ d(x, y) =\sum_{n=1}^{\infty} 2^{-n} \frac{p_n(x-y )}{1 + p_n(x-y )}, \; \textmd{for each}\;   x, y \in X.$$
Then, $d$ is a translation invariant $\mathbb{D}$-valued metric on $X$ and the topology on $X$ generated by $d$ is the topology generated by the family   $\mathcal{P}$.
\end{lemma}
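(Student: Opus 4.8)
The plan is to exploit the idempotent decomposition throughout, reducing every hyperbolic-valued statement to a pair of classical real-valued ones indexed by $e_1$ and $e_2$. First I would check that $d$ is well defined and $\mathbb{D}^{+}$-valued. Writing each $p_n(x-y) = e_1 a_n + e_2 b_n$ with $a_n, b_n \geq 0$ (legitimate since $p_n$ takes values in $\mathbb{D}^{+}$ by Theorem \ref{thm5}(iii)), and using that inversion in the idempotent representation is componentwise, one obtains
$$\frac{p_n(x-y)}{1+p_n(x-y)} = e_1\,\frac{a_n}{1+a_n} + e_2\,\frac{b_n}{1+b_n}.$$
Since $\frac{a_n}{1+a_n}, \frac{b_n}{1+b_n} \in [0,1)$, both scalar series $\sum 2^{-n}\frac{a_n}{1+a_n}$ and $\sum 2^{-n}\frac{b_n}{1+b_n}$ converge (dominated by $\sum 2^{-n}$), so $d(x,y) = e_1 D_1 + e_2 D_2 \in \mathbb{D}^{+}$ with both components in $[0,1)$.

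Next I would verify the metric axioms. Symmetry is immediate from $p_n(x-y) = |-1|_k\,p_n(y-x) = p_n(y-x)$. For definiteness, $d(x,y)=0$ forces $D_1 = D_2 = 0$, hence $a_n = b_n = 0$, that is $p_n(x-y)=0$ for every $n$; separatedness of $\mathcal{P}$ then yields $x=y$, while the converse uses $p_n(0)=0$. The triangle inequality is the one genuinely computational point: reading the hyperbolic seminorm inequality $p_n(x-z) \leq' p_n(x-y)+p_n(y-z)$ in components and invoking the classical facts that $t \mapsto t/(1+t)$ is increasing and satisfies $\frac{s+t}{1+s+t} \leq \frac{s}{1+s}+\frac{t}{1+t}$ for $s,t\geq 0$, one gets the estimate termwise in each of the $e_1$- and $e_2$-components and sums. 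Translation invariance holds because $d(x+z,y+z)$ depends only on $(x+z)-(y+z)=x-y$.

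Finally, the main work is to show $\tau_d = \tau_{\mathcal{P}}$. Here I would first use the monotonicity hypothesis $p_n \leq' p_{n+1}$ to reduce the $\tau_{\mathcal{P}}$-neighborhood base at $0$ to the single-seminorm sets $V_{m,\epsilon} = \{x : p_m(x) <' \epsilon\}$, since $p_m(x) <' \epsilon$ already forces $p_n(x) \leq' p_m(x) <' \epsilon$ for all $n \leq m$. Then I would show the two families of neighborhoods of $0$ refine one another. Given $V_{m,\epsilon}$, choosing a hyperbolic radius $r$ whose $e_1$- and $e_2$-components are smaller than $2^{-m}\frac{\epsilon_1}{1+\epsilon_1}$ and $2^{-m}\frac{\epsilon_2}{1+\epsilon_2}$ makes $\{x : d(0,x) <' r\} \subset V_{m,\epsilon}$, because the single term $2^{-m}\frac{a_m}{1+a_m} \leq' d(0,x)$ controls $a_m$ (and symmetrically $b_m$) by monotonicity of $t/(1+t)$; conversely, given a $d$-ball of radius $r$, I pick $N$ with $\sum_{n>N} 2^{-n}$ below half of each component of $r$ and then $\epsilon$ small, so that $V_{N,\epsilon}$ bounds the first $N$ terms while the geometric tail bounds the rest, giving $V_{N,\epsilon} \subset \{x : d(0,x) <' r\}$. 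Transporting both inclusions to an arbitrary point by translation invariance of $d$ together with Lemma \ref{lemma3a} shows the neighborhood filters coincide at every point, hence $\tau_d = \tau_{\mathcal{P}}$.

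The main obstacle I anticipate is the topology-equivalence step, not the algebra: the estimates must be carried out simultaneously in both idempotent components under the partial order $\leq'$, and care is needed with the strict relation $<'$ (strict positivity of both components) when choosing the hyperbolic radii $r$ and tolerances $\epsilon$, so that each nesting inclusion remains strict in each component. The subadditivity fact for $t/(1+t)$ underlying the triangle inequality is standard; the real bookkeeping is keeping the two component estimates and the uniform geometric tail bound consistent.
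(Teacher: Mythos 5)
Your proposal is correct and follows exactly the route the paper intends: the paper omits the proof of Lemma \ref{lemma40}, stating only that it proceeds ``on similar lines'' as the classical Fr\'echet-combination metric argument (Larsen, Theorem 2.5.1), and your idempotent-componentwise reduction is the natural and complete way to carry that adaptation out. All the key points are in place --- invertibility of $1+p_n(x-y)$ via strictly positive components, the subadditivity of $t\mapsto t/(1+t)$ read in each component, the use of monotonicity $p_n\leq' p_{n+1}$ to reduce to single-seminorm neighbourhoods, the geometric tail estimate, and translation invariance to transport the base at $0$.
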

\begin{theorem} A locally $\mathbb{BC}$-convex module $(X, \tau)$ is $\mathbb{D}$-metrizable if and only if its topology is generated by   a countable  separated  family $\mathcal{P}$ of  $\mathbb{D}$-valued seminorms on $X$.
\end{theorem}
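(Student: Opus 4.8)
The statement is a biconditional, so the plan is to prove the two implications separately, leaning on the machinery already assembled rather than building a metric from scratch.

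For the sufficiency direction, suppose the topology of $(X,\tau)$ is generated by a countable separated family $\mathcal{P}=\{p_n\}$ of $\mathbb{D}$-valued seminorms. This family need not be monotone, which is precisely the hypothesis demanded by Lemma \ref{lemma40}. I would therefore first invoke Lemma \ref{th28}: passing from $\mathcal{P}$ to $\mathcal{Q}=\{q_m\}$ with $q_m=\sup\{p_1,\dots,p_m\}$ produces a countable separated family that is increasing, $q_m\leq' q_{m+1}$, and generates the same topology, since $(X,\tau_{\mathcal{P}})$ and $(X,\tau_{\mathcal{Q}})$ are topologically isomorphic. Applying Lemma \ref{lemma40} to $\mathcal{Q}$ then yields a translation-invariant $\mathbb{D}$-valued metric $d$ whose induced topology coincides with $\tau_{\mathcal{Q}}=\tau$. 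Hence $(X,\tau)$ is $\mathbb{D}$-metrizable, and this half is essentially a two-step chaining of the two lemmas.

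For the necessity direction, assume $(X,\tau)$ is locally $\mathbb{BC}$-convex and $\mathbb{D}$-metrizable. Being $\mathbb{D}$-metrizable, $X$ is first countable by the lemma that every $\mathbb{D}$-metric space is first countable; fix a countable neighbourhood base $\{V_n\}$ at $0$. Using local $\mathbb{BC}$-convexity together with the earlier theorem guaranteeing that every neighbourhood of $0$ is $\mathbb{BC}$-absorbing and that every $\mathbb{BC}$-convex neighbourhood of $0$ contains a $\mathbb{BC}$-convex $\mathbb{BC}$-balanced neighbourhood of $0$, I would shrink each $V_n$ to a $\mathbb{BC}$-convex, $\mathbb{BC}$-balanced, $\mathbb{BC}$-absorbing neighbourhood $B_n\subset V_n$; the collection $\{B_n\}$ remains a countable neighbourhood base at $0$. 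To each $B_n$ I attach its hyperbolic-valued Minkowski functional $p_n=q_{B_n}$, which is a $\mathbb{D}$-valued seminorm by Theorem \ref{thm20}, and set $\mathcal{P}=\{p_n\}$. Exactly as in the proof of Theorem \ref{th30}, the containments $\{x:p_n(x)<'1\}\subset B_n\subset\{x:p_n(x)\leq'1\}$ show that $\mathcal{P}$ generates $\tau$, and separatedness follows from the Hausdorff property of $X$: for $x\neq0$ some $B_n$ omits $x$, which forces $p_n(x)\neq0$.

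The main obstacle I anticipate is entirely in the necessity direction, namely passing cleanly from the abstract countable base $\{V_n\}$ supplied by first countability to a countable base of $\mathbb{BC}$-convex, $\mathbb{BC}$-balanced, $\mathbb{BC}$-absorbing sets whose Minkowski functionals actually recover $\tau$; one must check that refining inside each $V_n$ does not destroy the base property. A related subtlety is that the separatedness step must respect that $\leq'$ is only a partial order, so $x\notin B_n$ does not immediately hand one the totally-ordered conclusion $p_n(x)\geq'1$. It is therefore cleaner to argue directly: if $p_n(x)=q_{B_n}(x)=0$, then the $\mathbb{D}$-infimum defining $q_{B_n}(x)$ being $0$ produces some $\alpha$ with $0\leq'\alpha\leq'1$ and $x\in\alpha B_n$, whence $\mathbb{BC}$-balancedness gives $\alpha B_n\subset B_n$ and so $x\in B_n$, contradicting $x\notin B_n$.
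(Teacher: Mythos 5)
Your proposal is correct and follows essentially the same route as the paper: for necessity, a countable base of $\mathbb{BC}$-convex, $\mathbb{BC}$-balanced, $\mathbb{BC}$-absorbing neighbourhoods of $0$ obtained from first countability and local $\mathbb{BC}$-convexity, whose Minkowski functionals form the desired separated family as in Theorem \ref{th30}; for sufficiency, Lemma \ref{th28} to make the family increasing followed by Lemma \ref{lemma40}. Your only addition is to fill in details the paper leaves implicit, in particular the observation that $x\notin B_n$ does not immediately yield $q_{B_n}(x)\geq'1$ under the partial order $\leq'$ and your direct argument that $q_{B_n}(x)=0$ would force $x\in B_n$, which is a worthwhile refinement of the separatedness step.
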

\begin{proof} Suppose  $X$ is $\mathbb{D}$-metrizable. Then $X$ has a countable  neighbourhood base $\mathcal{B}= \{ B_n \; : \; n \in \mathbb{N} \}$  at $0$ of  $\mathbb{BC}$-convex sets. Assume that  each $B_n$ is $\mathbb{BC}$-balanced and $\mathbb{BC}$-absorbing  and $B_{n+1} \subset B_n$  for each $n \in \mathbb{N}$. Let $q_n$ be the $\mathbb{D}$-valued Minkowski functional of $B_n$. Then as in the  proof of Theorem \ref{th30},  $\mathcal{Q}= \{q_n \}$ is a separated  family $\mathcal{P}$ of  $\mathbb{D}$-valued seminorms on $X$ that generates the topology on $X$.

Conversely,  suppose $\mathcal{P}= \{p_n \}$ is a countable separated family of  $\mathbb{D}$-valued seminorms on $X$ that generates the topology on $X$. By Lemma \ref{th28}, we may assume that the family $\{p_n \}$  of  $\mathbb{D}$-valued seminorms is increasing. The rest of the proof follows from Lemma \ref{lemma40}.
\end{proof}

\begin{theorem} A topological $\mathbb{BC}$-module $(X, \tau)$ is $\mathbb{D}$-normable if and only if it contains a bounded $\mathbb{BC}$-convex neighbourhood of $0.$ 
\end{theorem}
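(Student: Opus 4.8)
This is the bicomplex analogue of Kolmogorov's normability criterion, and the plan is to prove the two implications separately, with the converse carrying essentially all of the difficulty. The forward direction extracts the unit ball of the norm and checks it is bounded, convex and a neighbourhood of $0$; the converse reconstructs a norm from such a set via its Minkowski functional and then identifies the two topologies.

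For the forward implication, suppose the topology of $X$ is generated by a $\mathbb{D}$-valued norm $\|\cdot\|_{\mathbb{D}}$. I would take $B = \{x \in X : \|x\|_{\mathbb{D}} <' 1\}$, the open unit ball. Since a $\mathbb{D}$-valued norm is in particular a $\mathbb{D}$-valued seminorm, Theorem \ref{thm18} gives at once that $B$ is $\mathbb{BC}$-convex (indeed also $\mathbb{BC}$-balanced and $\mathbb{BC}$-absorbing), while $B$ is a basic neighbourhood of $0$. It remains only to check boundedness: given any neighbourhood $U$ of $0$, choose a basic ball $\{x : \|x\|_{\mathbb{D}} <' \delta\} \subset U$ with $\delta$ invertible, and use homogeneity of the norm together with Lemma \ref{lemma3b} to conclude that $B \subset \delta^{-1} U$. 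Hence $B$ is a bounded $\mathbb{BC}$-convex neighbourhood of $0$.

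For the converse, suppose $V$ is a bounded $\mathbb{BC}$-convex neighbourhood of $0$. First I would shrink $V$ to a more convenient set: by the neighbourhood theorem based on \cite[Theorem 1.14]{rudin} (its part (iii)), $V$ contains a $\mathbb{BC}$-convex, $\mathbb{BC}$-balanced neighbourhood $U$ of $0$; moreover $U$ is $\mathbb{BC}$-absorbing by part (i) of the same theorem, and $U$ is bounded since it is a subset of the bounded set $V$. Thus $U$ is bounded, $\mathbb{BC}$-convex, $\mathbb{BC}$-balanced and $\mathbb{BC}$-absorbing, so the Corollary to Theorem \ref{thm20} shows that the $\mathbb{D}$-valued Minkowski functional $q_U$ is a $\mathbb{D}$-valued norm on $X$. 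The remaining and central task is to prove that the topology $\tau_{q_U}$ generated by $q_U$ coincides with the original topology $\tau$. Writing $A_U = \{x : q_U(x) <' 1\}$, I would establish the two directions via neighbourhood bases at $0$. For one inclusion, Theorem \ref{thm24}(i) gives $U^o \subset A_U$, so each ball $\mu A_U$ (with $\mu >' 0$ invertible) contains $\mu U^o$, which is $\tau$-open and contains $0$ by Lemma \ref{lemma3b}; hence every $q_U$-ball is a $\tau$-neighbourhood of $0$. For the reverse inclusion, given a $\tau$-neighbourhood $W$ of $0$, boundedness of $U$ yields $\lambda >' 0$ with $U \subset \lambda W$, whence the $q_U$-ball $\lambda^{-1} A_U \subset \lambda^{-1} U \subset W$, again by Theorem \ref{thm24}(i). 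Transporting these base statements across $X$ using Lemma \ref{lemma3a}, the two topologies agree and $X$ is $\mathbb{D}$-normable.

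The main obstacle I anticipate lies entirely in the converse and is bound up with the hyperbolic, two-component nature of the scalars. Every scaling step above requires the radius $\mu$ or $\lambda$ to lie outside the null cone $\mathcal{NC}$, so that $M_\mu$ is a homeomorphism; one must therefore verify that the basic $q_U$-balls may be taken with both idempotent components of the radius strictly positive, and that boundedness of $U$ genuinely supplies an invertible $\lambda$. Controlling the degenerate cases in which a component of $\delta$, $\mu$ or $\lambda$ vanishes — exactly the component-wise case splitting already seen in the Hausdorff argument earlier in this section — is where the care is needed; once strictly positive invertible radii are secured, the identification $\tau = \tau_{q_U}$ is routine.
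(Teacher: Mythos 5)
Your proposal is correct and follows essentially the same route as the paper: the forward direction takes the open unit ball of the norm, and the converse replaces the bounded $\mathbb{BC}$-convex neighbourhood by a $\mathbb{BC}$-balanced one, forms its $\mathbb{D}$-valued Minkowski functional, and uses boundedness to show the scaled balls form a neighbourhood base at $0$. You are merely more explicit than the paper about why one may assume balancedness, about checking both inclusions of the topology comparison, and about the invertibility of the scaling factors $\lambda$, $\gamma$ outside the null cone --- points the paper's proof passes over in silence.
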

\begin{proof} Suppose, $(X, \tau)$ is $\mathbb{D}$-normable. Then, clearly,  the set $B = \{x \in X \; :\; ||x||_{\mathbb{D}}< ' 1 \}$ is a bounded $\mathbb{BC}$-convex neighbourhood of $0.$ 

Conversely,  suppose $B$ is a bounded $\mathbb{BC}$-convex neighbourhood of $0.$ We may assume that $B$ is $\mathbb{BC}$-balanced. For each $x \in X$, define
\begin{equation}\label{eq41}
||x||_{\mathbb{D}}= q_B (x),
\end{equation}
where $q_B$ is the $\mathbb{D}$-valued Minkowski functional of $B$.  Then, boundedness of $B$ implies that  (\ref{eq41}) is a  $\mathbb{D}$-valued norm on $X$. Since $B$ is open, $B = \{x \in X \; :\; ||x||_{\mathbb{D}}< ' 1 \}$. We show that $\{ \lambda B \; : \; \lambda >' 0 \}$ is a neighbourhood base at $0$. For this, let $U$ be a neighbourhood of $0$. Since $B$ is bounded, there exists $ \gamma >' 0$ such that $B \subset  \gamma U$. That is, $\lambda B \subset  U$, where $\lambda = \frac{1}{\gamma} > ' 0.$ Thus, $\{ \lambda B \; : \; \lambda >' 0 \}$ forms a neighbourhood base at $0$. This completes the proof.
\end{proof}
\end{section} 

\begin{remark} All the results in this paper can also be developed for topological hyperbolic modules on similar lines with slight adjustments.
\end{remark}

\bibliographystyle{amsplain}

\noindent Romesh Kumar, \textit{Department of Mathematics, University of Jammu, Jammu, J\&K - 180 006, India.}\\
E-mail :\textit{ romesh\_jammu@yahoo.com}\\

\noindent Heera Saini, \textit{Department of Mathematics, University of Jammu, Jammu,  J\&K - 180 006, India.}\\
E-mail :\textit{ heerasainihs@gmail.com}\\

\end{document}